\documentclass[a4paper,reqno]{amsart}

\usepackage{amsfonts}
\usepackage{mathrsfs}
\usepackage{graphicx}
\usepackage[normalem]{ulem}
\usepackage{url}

\usepackage{multirow}
\usepackage{rotating}
\usepackage{booktabs}

\usepackage{amsthm,amsfonts,amssymb,amsmath,esint,amscd,latexsym,amsxtra}
\usepackage[colorlinks=true, urlcolor=blue,bookmarks=true,bookmarksopen=true,
citecolor=blue]{hyperref}
\usepackage{amscd}
\usepackage{enumerate,bbm}


\usepackage{fullpage}
\usepackage[all]{xy}
\usepackage[OT2,T1]{fontenc}
\usepackage{xspace}


\newcommand{\BC}{{\mathbb {C}}}

 \newcommand{\BN}{{\mathbb {N}}}
 
\newcommand{\BQ}{{\mathbb {Q}}}

 \newcommand{\BZ}{{\mathbb {Z}}}

\newcommand{\CE}{{\mathcal {E}}} 
 \newcommand{\CH}{{\mathcal {H}}}
 \newcommand{\CJ}{{\mathcal {J}}}

\newcommand{\CO}{{\mathcal {O}}} 
 
\newcommand{\CS}{{\mathcal {S}}} 
 \newcommand{\CV}{{\mathcal {V}}}
\newcommand{\CW}{{\mathcal {W}}}

\newcommand{\fs}{{\mathfrak{s}}}

\newcommand{\fM}{{\mathfrak{M}}}

 \newcommand{\fX}{{\mathfrak{X}}}

\newcommand{\Aut}{{\mathrm{Aut}}}

\newcommand{\BBC}{{\mathrm{BC}}}

\newcommand{\diag}{{\mathrm{diag}}}
 
\newcommand{\End}{{\mathrm{End}}} 

\newcommand{\Frac}{{\mathrm{Frac}}}

 \newcommand{\GL}{{\mathrm{GL}}}

\newcommand{\Hom}{{\mathrm{Hom}}}

\newcommand{\id}{{\mathrm{id}}}
\newcommand{\Ind}{{\mathrm{Ind}}}

\renewcommand{\Re}{{\mathrm{Re}}}

\newcommand{\Res}{{\mathrm{Res}}}

\newcommand{\Spec}{{\mathrm{Spec\hspace{2pt}}}}

\newcommand{\RTr}{{\mathrm{Tr}}}
\newcommand{\val}{{\mathrm{val}}}


\newcommand{\wt}[1]{{\widetilde {#1}}}

\newcommand{\sk}{\medskip}
\newcommand{\lra}{\longrightarrow}

\newcommand{\bs}{\backslash}

\newcommand{\s}{\sk\noindent}

\makeatletter\def\varW@#1#2{%
\vtop{\m@th\ialign{##\cr
\hfil$#1 \mathrm{colim} $\hfil\cr
\noalign{\nointerlineskip\kern1.5\ex@}#2\cr
\noalign{\nointerlineskip\kern-\ex@}\cr}}
}
\makeatother
\makeatletter
\def\colim{%
\mathop{\mathpalette\varW@{}}\nmlimits@
}\makeatother

\theoremstyle{plain}
\newtheorem{thm}{Theorem}[section] \newtheorem{cor}[thm]{Corollary}

\newtheorem{lem}[thm]{Lemma}  \newtheorem{prop}[thm]{Proposition}
\newtheorem {conj}[thm]{Conjecture}


\theoremstyle{remark} \newtheorem{remark}[thm]{Remark}
\theoremstyle{definition} 
\theoremstyle{definition}  
\newtheorem{defn}[thm]{Definition}\newtheorem{defn+lem}[thm]{Definition and Lemma}


\numberwithin{equation}{section}

\newcommand*{\sheafhom}{\mathrm{H}\kern -.5pt om}
\begin{document}
\title{Spherical Characters in Families: the unitary Gan-Gross-Prasad case}

\begin{abstract}
We consider the variation of spherical characters in families. We formulate
 conjectures for the rationality and meromorphic property of spherical characters. As an example, we  establish these conjectures in the unitary Gan-Gross-Prasad case. 
\end{abstract}

\author{Li Cai}
\address{Academy for Multidisciplinary Studies\\
Beijing National Center for Applied Mathematics\\
Capital Normal University\\
Beijing, 100048, People's Republic of China}
\email{caili@cnu.edu.cn}

\author{Yangyu Fan}
\address{Academy for Multidisciplinary Studies\\
Beijing National Center for Applied Mathematics\\
Capital Normal University\\
Beijing, 100048, People's Republic of China}
\email{b452@cnu.edu.cn}

\maketitle

\tableofcontents

\section{Spherical characters in families}
Let $F$ be a $p$-adic field and $H\subset G$ be a pair of $F$-reductive groups such that  the geometric quotient $Y:=H\bs G$ is {\em spherical}. 

Let $\sigma$ be a complex irreducible unitary $G(F)$-representation  appearing in the Plancherel decomposition 
of $L^2(Y(F))$ with a $G(F)$-invariant pairing on $\sigma$ and its contragradient $\sigma^\vee$. Assume the local
conjecture of Sakellaridis-Venkatesh \cite{SV}. There is 
a {\em canonical local period} 
\[Z_\sigma(\cdot,\cdot):\ \sigma\times\sigma^\vee\to\BC\]
in the bilinear space $\Hom_{H(F)^2}(\sigma \times \sigma^\vee, \BC)$. Globally, under certain conditions (for example, 
$(G,H)$ is a Gelfand pair), it is conjectured that for any cuspidal automorphic representation over $G$,
(the square of) its period integrals over $H$ admit an Eulerian decomposition into the product of the above canonical local 
periods and  {\em certain $L$-value}. 

In some circumstances, especially in the relative trace formula framework, it is more convenient to consider
the {\em spherical character} attached to the caonical local period $Z_\sigma$
$$J_\sigma: \CS(G(F),\BC)\to \BC,\quad f\in \CH(K,\BC)\mapsto \sum_i Z_\sigma(\sigma(f)\varphi_i,\varphi^i).$$
Here,  
\begin{itemize}
\item for a coefficient field $E$, $\CS(G(F),E)$ is the Hecke algebra of $E$-valued compactly supported functions on 
$G(F)$ and for any open compact subgroup $K\subset G(F)$, $\CH(K,E)\subset \CS(G(F),E)$ is the subspace of 
bi-$K$-invariant functions.
\item $\{\varphi_i\}$ and $\{\varphi^i\}$ are bases of $\sigma^K$ and $\sigma^{\vee,K}$ respectively such that  
	$(\varphi_i,\varphi^j)=\delta_{ij}$. 
\item $\sigma(f)\varphi = \int_{G(F)} f(g)\sigma(g)\varphi dg$ is the induced action of $\CS(G(F),\BC)$ on $\sigma$.
\end{itemize}
Note that $J_\sigma$ is actually independent of the choice of $(\cdot,\cdot)$. The above Eulerian decomposition
of period integrals into  canonical local periods and $L$-value is equivalent to the Eulerian decomposition of 
the attached  Bessel distributions into spherical characters and the same $L$-value (See \cite[Lemma 1.7]{WeZ14}).

In \cite{CF21}, motivatived by the construction of $p$-adic $L$-functions from the Eulerian decomposition
of periods integrals, we consider the rationality and meromorphic properties of canonical local periods for
families of representations.

Let $E$ be a fixed coefficient  field embeddable into $\BC$, $R$ be a finitely reduced Noetherian $E$-algebra and $\Sigma\subset X=\Spec(R)$ be a fixed Zariski dense subset of closed points. Let $\pi$ be a finitely generated smooth admissible torsion-free  $R[G(F)]$-module, i.e. $\pi$ is a torsion-free $R$-module equipped with a $R$-linear action of $G(F)$ such that 
\begin{itemize}
    	\item (smooth)   any $\varphi \in \pi$ is fixed by some open compact subgroup of $G(F)$;
	\item (admissible)  the $R$-submodule
	$\pi^K\subset \pi$ of $K$-fixed elements is  finitely generated for any compact open subgroup $K \subset G(F)$;
	\item (finitely generated)  $\pi$ is finitely generated as a $R[G(F)]$-module.
\end{itemize}
For any closed point $x\in \Sigma$ with residue field $k(x)$, denote by $\CE(\pi|_x)$ the set 
	    of field embedding $\tau:\ k(x)\to\BC$ such that the base change $(\pi|_x)_\tau=\pi|_x\otimes_{k(x),\tau}\BC$ for the specialization of $\pi$ at $x$ is irreducible and appears in $L^2(Y(F))$.
\begin{conj}\cite[Conjecture 1.1]{CF21}\label{conj-2}
Assume  $\CE(\pi|_x)$ is non-empty for any $x\in \Sigma$ and moreover
	\begin{itemize}
    \item there exists a finitely generated smooth admissible torsion-free  $R[G(F)]$-module  $\wt{\pi}$ together
	    with a $G(F)$-invariant $R$-bilinear pairing
$( \cdot,\cdot ): \pi \times \tilde{\pi} \lra R$  such that for any $x\in\Sigma$,  $(\cdot,\cdot)$ 
induces  a non-degenerate $G(F)$-invariant pairing $( \cdot,\cdot )|_x:\ \pi|_x\times\tilde{\pi}|_x\to k(x).$
	\end{itemize}
	Then 
	\begin{enumerate}
		\item (Rationality) for any $x\in\Sigma$, there exists a unique bi-$H(F)$-invariant pairing 
		$$Z_{\pi|_x}:\ \pi|_x\times \tilde{\pi}|_x\to k(x)$$
	such that for any $\tau\in\CE(\pi|_x)$,	 the following diagram commutes
			\[\xymatrix{
			&\pi|_x \times \tilde{\pi}|_x \ar[d]^-{\tau} \ar[r]^-{Z_{\pi|_x}} & k(x) \ar[d]^-{\tau} \\
			&(\pi|_x)_\tau \times (\tilde{\pi}|_x)_\tau \ar[r]^-{Z_{(\pi|_x)_\tau}} & \BC
			.}\]
		 Here $Z_{(\pi|_x)_\tau}$ is defined with respect to the linear extension of $(\cdot,\cdot)|_x$.
		
		\item (Meromorphy) upon shrinking $\Spec(R)$ to an open subset containing $\Sigma$, there exists a bi-$H(F)$-invariant $R$-linear pairing 
		$$Z_\pi:\ \pi\times\tilde{\pi}\to R$$
		such that for any $x\in\Sigma$, the following
		diagram is commutative
		\[\xymatrix{
			&\pi \times \tilde{\pi} \ar[d] \ar[r]^-{Z_\pi} & R \ar[d] \\
			&\pi|_x \times \tilde{\pi}|_x \ar[r]^-{Z_{\pi|_x}} & k(x)
			.}\]
		Here both vertical arrows are the specialization maps. 
	\end{enumerate}
\end{conj}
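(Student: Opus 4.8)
The plan is to work throughout in the unitary Gan--Gross--Prasad (Bessel) situation $G=\RU(W)\times\RU(V)$, $H=\RU(W)$ with $W\subset V$ of codimension one, and to reduce both assertions to an analysis of the \emph{local Gan--Gross--Prasad integral}. The starting point is that the pair $(G,H)$ is strongly tempered, so that (by \cite{SV}) $L^2(Y(F))$ decomposes over tempered representations and, for every such $\sigma$, the defining integral converges absolutely (Beuzart-Plessis); moreover the canonical period of \cite{SV} equals, up to an explicit ratio $c_\sigma$ of local $L$-factors of $\sigma$, the \emph{raw integral}
\[
I_\sigma(\varphi,\varphi^\vee)\;=\;\int_{H(F)}\langle\sigma(h)\varphi,\varphi^\vee\rangle\,dh ,
\]
with $dh$ normalized so that $\vol(K\cap H(F))\in\BQ$. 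Since the Satake parameters in a family vary algebraically, $c_\sigma$ is fibrewise a residue-field element and globally a rational function on $\Spec R$; it may be carried along harmlessly, so it suffices to treat $I$ in place of $Z$. The engine is the expansion of $h\mapsto(\pi(h)\varphi,\tilde\varphi)$ obtained from the Cartan decomposition $H(F)=K_HA^+K_H$ and Casselman's asymptotics: a \emph{finite part} (the finitely many $K$-double cosets meeting a fixed compact set) plus, deep in the dominant cone $A^+$, a finite sum $\sum_\chi p_\chi(a)\chi(a)$ whose exponents $\chi$ and polynomial coefficients $p_\chi$ are governed by the Jacquet modules of $\pi$ along the parabolics of $G$ adapted to $H\hookrightarrow G$. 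Paired against the orbital volumes $\vol(K_HaK_H)$ --- a polynomial times a character on $A^+$ --- each term becomes a multidimensional geometric series $\sum_{a\in A^+}(\mathrm{poly})(a)\,\Psi(a)$ summing to a rational function $(\text{numerator})\cdot\prod_i(1-\Psi(\omega_i))^{-1}$ in the values $\Psi(\omega_i)$ at the generators $\omega_i$ of the cone, absolute convergence being precisely the condition $|\Psi(\omega_i)|<1$.

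For \emph{Rationality} I would fix $x\in\Sigma$ and $\tau\in\CE(\pi|_x)$, so that $(\pi|_x)_\tau$ is irreducible, tempered and $H(F)$-distinguished, and take $\varphi\in\pi|_x$, $\tilde\varphi\in\tilde\pi|_x$. The exponents $\chi$ and coefficients $p_\chi$ in the expansion of the matrix coefficient of $(\pi|_x)_\tau$ depend only on the Jacquet modules of $\pi|_x$, which are defined over $k(x)$; so the finite part of $I_{(\pi|_x)_\tau}(\tau\varphi,\tau\tilde\varphi)$ lies in $\tau(k(x))$, and each geometric-series contribution lies in $\tau(k(x))$ because the denominators $1-\Psi(\omega_i)$ are nonzero --- which is exactly the absolute convergence forced by temperedness. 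Hence $I_{(\pi|_x)_\tau}(\tau\varphi,\tau\tilde\varphi)=\tau\!\left(I_{\pi|_x}(\varphi,\tilde\varphi)\right)$, the right-hand side being the explicit algebraic expression computed over the $k(x)$-structure, manifestly independent of $\tau$; after restoring $c$ this is the desired $Z_{\pi|_x}$. Bilinearity is clear, and bi-$H(F)$-invariance follows either formally from translation-invariance of the truncated integrals or from the one-dimensionality of $\Hom_{H(F)^2}((\pi|_x)_\tau\times(\tilde\pi|_x)_\tau,\BC)$ together with a vector on which $(\cdot,\cdot)|_x$ is nonzero. Uniqueness is immediate, since the difference of two pairings fitting the diagram dies under the injective map $\tau$.

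For \emph{Meromorphy} I would first shrink $\Spec R$ to a dense open $U\supseteq\Sigma$ over which every $\pi^K$ is free (possible as $R$ is reduced Noetherian and $\pi$ finitely generated torsion-free) and over which the Jacquet modules of $\pi$ along the relevant parabolics are locally free with exponents given by invertible regular functions --- passing, if needed, to a finite cover to split the exponent characters, a step then to be reconciled with the statement by Galois descent. Removing the divisor where some combined denominator $1-\Psi(\omega_i)$ vanishes gives $U'$, and $\Sigma\subseteq U'$ because temperedness forces $|\tau(\Psi(\omega_i))|<1$ at every $x\in\Sigma$. On $U'$ one defines $Z_\pi$ on pure tensors by the same recipe --- finite part plus the $\CO(U')$-valued summed geometric series --- and extends $\CO(U')$-bilinearly; bi-$H(F)$-invariance and the compatibility $Z_\pi|_x=Z_{\pi|_x}$ over $\Sigma$ then descend from the fibrewise statements using Zariski density of $\Sigma$ in $U'$ and torsion-freeness (an $\CO(U')$-bilinear map into a torsion-free module vanishing on a Zariski-dense family of fibres vanishes).

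The main obstacle is the \emph{family version of Casselman's asymptotics}: one must show that, uniformly over $U$, the matrix coefficient $(\pi(a)\varphi,\tilde\varphi)$ on $A^+$ admits the finite exponent-expansion with exponents and polynomial coefficients varying algebraically over $\CO(U)$, and --- the crucial point --- that the depth in $A^+$ beyond which it is valid can be taken uniformly in the family; this rests on finite generation of the Jacquet modules over $R[A]$, a uniform-admissibility bound for the depth, and splitting the exponent characters over a finite cover, with the interaction of all this with the prescribed $E$-structure being the delicate matter. A secondary, bookkeeping issue is tracking the factor $c_\sigma$ and the precise comparison, in the unitary Gan--Gross--Prasad case, between the raw integral and the canonical period of \cite{SV}; as noted this only multiplies the answer by a function rational on $\Spec R$ and algebraic in each fibre, so it affects neither rationality nor meromorphy.
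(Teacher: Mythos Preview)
The statement is a \emph{conjecture} in this paper and is not proved here; for the unitary Gan--Gross--Prasad case the authors simply invoke \cite[Proposition~4.13]{CF21}, so there is no in-paper argument against which to compare your proposal.

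Evaluating your sketch on its own merits: the plan---reduce to the raw tempered integral $I_\sigma(\varphi,\varphi^\vee)=\int_{H(F)}(\sigma(h)\varphi,\varphi^\vee)\,dh$ times an $L$-factor ratio $c_\sigma$, expand the matrix coefficient along the Cartan decomposition of $H(F)$ via Casselman's asymptotics, and sum the resulting geometric series---is the natural direct attack for strongly tempered pairs and is in spirit what \cite{CF21} carries out. Two points need sharpening. First, your handling of $c_\sigma$ (``Satake parameters vary algebraically'') covers only the unramified case; for general $\pi$ the relevant adjoint and Rankin--Selberg/Asai $L$-factors must be shown to take values in $k(x)$ and to interpolate over $R$, which in practice forces a passage through base change and the co-Whittaker formalism rather than a one-line appeal to Satake parameters. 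Second---and you flag this yourself---the family version of Casselman's asymptotics is the substantive step: one needs the exponents to arise from Jacquet modules that are finitely generated and admissible over $R$ (now available by \cite{DHK22}), the polynomial coefficients to lie in $R$, and above all the depth in $A^+$ beyond which the expansion holds to be \emph{uniform} over $\Spec R$. The ingredients exist, but assembling them into the required uniform statement is the actual content; as written your proposal is an outline, and this is the step at which it remains incomplete.
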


It is natural to consider the rationality and meromorphy property of  spherical characters.  
\begin{conj}\label{conj-1} Let $\pi$  be a finitely generated smooth admissible  torsion-free $R[G(F)]$-module such 
	that  $\CE(\pi|_x) \neq\emptyset$ for any $x\in\Sigma$. Then 	\begin{enumerate}
		\item (Rationality) for any $x\in\Sigma$, there exists a unique character
		$$J_{\pi|_x}:\  \CS(G(F),k(x))\to k(x)$$ 
such that for any $\tau\in\CE(\pi|_x)$, the following diagram commutes
$$\xymatrix{ \CS(G(F),k(x)) \ar[d]^{\tau} \ar[r]^{\quad   J_{\pi|_x}} & k(x) \ar[d]^{\tau} \\
	\CS(G(F),\BC) \ar[r]^{\quad J_{(\pi|_x)_\tau}}  & \BC}$$	
		\item (Meromorphy) Moreover for any $f\in \CS(G(F),E)$, there exists a unique meromorphic function 
			$J_\pi(f)\in \Frac R$  interpolating $J_{\pi|_x}(f)$, $x\in\Sigma$.
	\end{enumerate}
\end{conj}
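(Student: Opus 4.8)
The plan is to deduce Conjecture \ref{conj-1} from Conjecture \ref{conj-2} by reducing the study of the spherical character to the study of the canonical local period applied to a well-chosen pair of vectors, and then to "spread out" this relation over $\Spec R$. Fix $f\in\CS(G(F),E)$. There is an open compact subgroup $K\subset G(F)$ so that $f\in\CH(K,E)$; shrinking $\Spec R$ if necessary so that the hypotheses of Conjecture \ref{conj-2} hold, we work with the finitely generated $R$-modules $\pi^K$ and $\tilde\pi^K$ and the pairing $(\cdot,\cdot)$ restricted to them, which at each $x\in\Sigma$ specializes to a perfect pairing between $(\pi|_x)^K$ and $(\tilde\pi|_x)^K$ over $k(x)$ (perfectness at the special fibers follows from the nondegeneracy hypothesis together with admissibility, possibly after a further shrink so that $\pi^K$ is a direct summand, hence projective). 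The operator $\pi(f)$ preserves $\pi^K$ and acts $R$-linearly on it; denote by $\pi^K(f)$ this endomorphism. The key algebraic observation is that the sum $\sum_i Z_\sigma(\sigma(f)\varphi_i,\varphi^i)$ defining $J_\sigma$ depends only on $\pi(f)$ as an endomorphism of $\sigma^K$ and on the restriction of the bi-$H(F)$-invariant pairing to $\sigma^K\times\sigma^{\vee,K}$: choosing dual bases $\{\varphi_i\},\{\varphi^i\}$ with respect to $(\cdot,\cdot)$, one has $J_\sigma(f)=\tr\big(Z_\sigma\circ(\pi(f)\otimes\id)\big)$, where $Z_\sigma$ is regarded as an element of $\Hom_R(\sigma^K\otimes\tilde\sigma^K,\BC)$ and we use the pairing $(\cdot,\cdot)$ to identify $\Hom(\sigma^K,\sigma^K)$ with $\sigma^K\otimes\sigma^{\vee,K}$. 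This trace expression makes sense over any base.

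Granting this, here is the construction. For the rationality statement, apply part (1) of Conjecture \ref{conj-2} to obtain, for each $x\in\Sigma$, the canonical $k(x)$-valued bi-$H(F)$-invariant pairing $Z_{\pi|_x}:\pi|_x\times\tilde\pi|_x\to k(x)$ compatible with all $\tau\in\CE(\pi|_x)$. Restrict $Z_{\pi|_x}$ to $(\pi|_x)^K\times(\tilde\pi|_x)^K$ and set
\[
J_{\pi|_x}(f)\ :=\ \tr\Big(Z_{\pi|_x}\circ\big(\pi^K(f)|_x\otimes\id\big)\Big)\ \in\ k(x),
\]
the trace being computed using the perfect pairing $(\cdot,\cdot)|_x$ to turn $Z_{\pi|_x}$ into an endomorphism of $(\pi|_x)^K$. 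One then checks $J_{\pi|_x}$ is a character on $\CS(G(F),k(x))$ (additivity and the convolution identity follow from the corresponding identities over $\BC$ together with density/Zariski arguments, or directly from the trace formalism and the fact that $\pi(f_1 * f_2)=\pi(f_1)\pi(f_2)$). The commuting square with $J_{(\pi|_x)_\tau}$ is immediate: applying $\tau$ to the trace expression and using the commuting square in Conjecture \ref{conj-2}(1) (which intertwines $Z_{\pi|_x}$ with $Z_{(\pi|_x)_\tau}$) together with $\tau\big(\pi^K(f)|_x\big) = (\pi|_x)_\tau(f)$ on $K$-invariants yields exactly $\tau(J_{\pi|_x}(f)) = J_{(\pi|_x)_\tau}(\tau f)$. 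Uniqueness of $J_{\pi|_x}$ is forced because $\CE(\pi|_x)\neq\emptyset$ and any two characters agreeing after composing with a field embedding $\tau$ agree (as $\tau$ is injective).

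For the meromorphy statement, apply part (2) of Conjecture \ref{conj-2}: after shrinking $\Spec R$ to an open set $U\supset\Sigma$ there is a bi-$H(F)$-invariant $R$-bilinear pairing $Z_\pi:\pi\times\tilde\pi\to R$ specializing to $Z_{\pi|_x}$ at each $x\in\Sigma$. Shrink further so that $\pi^K$ is a finitely generated projective (hence, over the localization, free) $R$-module with the pairing $(\cdot,\cdot)$ perfect on $\pi^K\times\tilde\pi^K$; this is possible on a dense open because the pairing is generically perfect, the locus of perfectness being the nonvanishing locus of a determinant. Then define
\[
J_\pi(f)\ :=\ \tr\Big(Z_\pi\circ\big(\pi^K(f)\otimes\id\big)\Big)\ \in\ R,
\]
using $(\cdot,\cdot)$ to view $Z_\pi$ as an $R$-linear endomorphism of the finite projective module $\pi^K$; the trace of an endomorphism of a finite projective module is a well-defined element of $R$. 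Compatibility of formation of trace with base change gives that the specialization of $J_\pi(f)$ at $x\in\Sigma$ equals $J_{\pi|_x}(f)$, so $J_\pi(f)\in R\subset\Frac R$ interpolates the required values; regarding it in $\Frac R$ accommodates the earlier shrinking, and uniqueness of the interpolating meromorphic function follows from Zariski density of $\Sigma$.

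\textbf{Main obstacle.} The genuinely delicate point is \emph{not} the trace bookkeeping but making precise the claim that $Z_{\pi|_x}$ and $Z_\pi$ restrict to the $K$-invariants compatibly with the identification used to compute traces, i.e. that the "finite-rank reduction" of the spherical character is intrinsic. Concretely one must know: (i) the pairing $(\cdot,\cdot)$ on $\pi\times\tilde\pi$ restricts to a perfect pairing on $K$-invariants after shrinking (a flatness/projectivity argument, using admissibility and the generic perfectness hypothesis in Conjecture \ref{conj-2}); (ii) $\pi^K$ is a direct summand of $\pi$ as an $R$-module so that $\pi(f)$ really does preserve it and the ambient trace over the infinite-dimensional $\pi$ agrees with the trace over $\pi^K$ — here one uses that $f\in\CH(K)$ acts through the projector onto $K$-invariants; and (iii) the archimedean identity $J_{(\pi|_x)_\tau}(f)=\tr\big(Z_{(\pi|_x)_\tau}\circ((\pi|_x)_\tau(f)\otimes\id)\big)$, which is just the definition of $J_\sigma$ rewritten, but which must be checked to be independent of the auxiliary choices exactly as remarked after the definition of $J_\sigma$ in the text. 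Once these are in place the two conjectures are formally equivalent, and in particular the unitary Gan-Gross-Prasad case of Conjecture \ref{conj-2} — which the paper establishes — yields Conjecture \ref{conj-1} in that case as well.
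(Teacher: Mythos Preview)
First, note that the statement you are ``proving'' is a \emph{conjecture}: the paper does not establish it in general. What the paper proves is (a) a conditional implication from Conjecture~\ref{conj-2} (Proposition~\ref{Imp}), and (b) the unitary Gan--Gross--Prasad case (Theorem~\ref{GGP case}). Your proposal is essentially an attempt at (a), so the relevant comparison is with Proposition~\ref{Imp}.

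For the rationality part your argument is correct and matches the paper: both rewrite $J_\sigma(f)$ as a trace of $Z_\sigma$ against $\pi(f)$ on $K$-invariants over the field $k(x)$, and the compatibility square follows from that in Conjecture~\ref{conj-2}(1).

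For the meromorphy part there is a genuine gap. You write ``shrink further so that $\pi^K$ is a finitely generated projective $R$-module with the pairing perfect on $\pi^K\times\tilde\pi^K$; this is possible on a dense open.'' A dense open exists, yes --- but the interpolation statement requires $J_\pi(f)$ to specialize correctly at \emph{every} $x\in\Sigma$, so you need this open to contain $\Sigma$. There is no reason a priori that the locus where $\pi^K$ is locally free (equivalently, where $x\mapsto\dim_{k(x)}\pi^K|_x$ is locally constant) meets every point of $\Sigma$; your appeal to ``generic perfectness'' and ``nonvanishing of a determinant'' gives only a dense open, not one containing $\Sigma$. This is exactly why the paper's Proposition~\ref{Imp} adds the explicit hypothesis that the fiber rank $\phi_{\pi^K}$ is locally constant on $\Sigma$ for all $K$; with that hypothesis (and Lemma~0FWG of the Stacks Project) your argument goes through and coincides with the paper's.

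This gap is not cosmetic. In the GGP case the paper \emph{cannot} simply invoke Conjecture~\ref{conj-2}~$\Rightarrow$~Conjecture~\ref{conj-1} for $\pi$ itself, because local constancy of fiber ranks is not known for representations of unitary groups. Instead the paper transfers to $G'=\Res_{F'/F}(\GL_n\times\GL_{n-1})$ via rational smooth matching (Proposition~\ref{RST}) and the Beuzart-Plessis character identity, proves local constancy for $\GL_n$ separately using co-Whittaker module theory (Proposition~\ref{local constancy}), and only then applies Proposition~\ref{Imp} on the $\GL$-side. Your final sentence --- that the GGP case of Conjecture~\ref{conj-2} ``yields Conjecture~\ref{conj-1} in that case as well'' --- is therefore not justified by your argument; the detour through $\GL_n$ is the whole point of the paper's proof.
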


In general, we don't know whether the above two conjectures are equivalent. 
However, we can prove the following implication result.

\begin{prop}\label{Imp} When $R=E$, the rationality part in Conjecture \ref{conj-2} implies that of Conjecture 
	\ref{conj-1}. For general $R$,  the meromorphy part of Conjecture \ref{conj-2} 
implies that of Conjecture \ref{conj-1} for  those $\pi$ whose  fiber rank of $\pi^K$ is locally constant on $\Sigma$ for all open compact subgroups $K\subset G(F)$, i.e. the function 
     $$\phi_{\pi^K}:\ X\mapsto \BN;\quad x\mapsto\dim_{k(x)}(\pi^K|_x)$$
     is locally constant  on $\Sigma$. 
\end{prop}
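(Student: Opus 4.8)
The plan is to reduce the rationality (resp. meromorphy) of the spherical character $J_{\pi|_x}$ (resp. $J_\pi$) to the corresponding statement for the canonical local period $Z$, using that a spherical character is built out of the period by summing over a dual basis. First consider $R=E$ and fix $x\in\Sigma$; since $R$ is a field, $k(x)$ is a finite extension of $E$. Choose $\wt\pi$ and the pairing $(\cdot,\cdot):\pi\times\wt\pi\to E$ as in Conjecture \ref{conj-2} (when $R=E$ any finitely generated smooth admissible torsion-free module is automatically ``nice'' enough; one takes for $\wt\pi$ the smooth dual, which is again finitely generated admissible, with the evaluation pairing, and this pairing specializes to a perfect pairing at each $x$ because $(\pi|_x)_\tau$ is irreducible hence its contragredient is $(\wt\pi|_x)_\tau$). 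Then the rationality part of Conjecture \ref{conj-2} gives a canonical $k(x)$-valued bi-$H(F)$-invariant pairing $Z_{\pi|_x}$ on $\pi|_x\times\wt\pi|_x$ whose $\tau$-base change is $Z_{(\pi|_x)_\tau}$ for every $\tau\in\CE(\pi|_x)$.

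Next I would write down $J_{\pi|_x}$ explicitly. Fix an open compact $K\subset G(F)$. The $K$-fixed subspace $(\pi|_x)^K$ and its dual $(\wt\pi|_x)^K$ are finite-dimensional $k(x)$-vector spaces, and $(\cdot,\cdot)|_x$ restricts to a perfect pairing between them (perfectness of a pairing of finite free modules over a field is detected after the faithfully flat base change $\otimes_{k(x),\tau}\BC$, where it holds by non-degeneracy of $(\cdot,\cdot)|_x$). Pick dual bases $\{\varphi_i\}$, $\{\varphi^i\}$ of these two spaces over $k(x)$, and set
\[
J_{\pi|_x}(f):=\sum_i Z_{\pi|_x}\bigl(\pi|_x(f)\varphi_i,\varphi^i\bigr),\qquad f\in\CH(K,k(x)),
\]
then extend over $\CS(G(F),k(x))=\varinjlim_K \CH(K,k(x))$. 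One checks, exactly as in the complex setting, that this is independent of the choice of dual bases (a change of basis is conjugation by $A\in\GL(\pi|_x^K)$ on one side and $(A^{-1})^{\mathrm t}$ on the other, and the sum is a trace) and is compatible with enlarging $K$. The key compatibility is that for $\tau\in\CE(\pi|_x)$ the image $\{\varphi_i\otimes 1\}$, $\{\varphi^i\otimes 1\}$ is still a dual basis of $(\pi|_x)_\tau^K$ and $(\wt\pi|_x)_\tau^K$ for the $\tau$-linear extension of $(\cdot,\cdot)|_x$; applying $\tau$ to the displayed sum and using that $\tau\circ Z_{\pi|_x}=Z_{(\pi|_x)_\tau}\circ(\tau\times\tau)$ yields $\tau(J_{\pi|_x}(f))=J_{(\pi|_x)_\tau}(f_\tau)$, which is the desired commuting square. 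Uniqueness is immediate since $\Sigma$ is Zariski dense and $\CE(\pi|_x)$ is nonempty: the square determines $J_{\pi|_x}(f)$ after applying the injective map $\tau$, hence determines it.

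For general $R$ and the meromorphy statement, I would run the same computation in families. The hypothesis is that $\phi_{\pi^K}$ is locally constant on $\Sigma$ for every $K$; shrink $\Spec R$ (using the meromorphy part of Conjecture \ref{conj-2}) to an open $U\supset\Sigma$ on which $Z_\pi:\pi\times\wt\pi\to R$ exists and specializes correctly, and shrink further so that for the chosen $K$ the $R$-module $\pi^K$ is locally free of the locally constant rank $\phi_{\pi^K}$ (here I use that a finitely generated module over a reduced Noetherian ring whose fiber rank is locally constant is locally free on a neighborhood of the locus where this holds — this is the standard ``constant fiber rank $\Rightarrow$ locally free'' lemma, valid after passing to a dense open; the admissibility of $\pi$ gives finite generation of $\pi^K$). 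On such $U$, and likewise shrinking so $\wt\pi^K$ is locally free, the pairing $(\cdot,\cdot):\pi^K\times\wt\pi^K\to R$ is a perfect pairing of locally free modules (it is perfect fiberwise at each $x\in\Sigma$ by the non-degeneracy hypothesis, and a morphism of locally free modules of the same rank that is an isomorphism on a dense set of fibers is an isomorphism after inverting a suitable element — or one shrinks once more). Then $\pi^K(f)$ for $f\in\CH(K,E)\subset\CH(K,R)$ is an $R$-linear endomorphism of $\pi^K$, and I would define
\[
J_\pi(f):=\mathrm{tr}\Bigl(\pi^K \xrightarrow{\ \pi^K(f)\ } \pi^K \xrightarrow{\ Z_\pi\text{-adjoint}\ } \pi^K\Bigr)\in R,
\]
concretely $\sum_i Z_\pi(\pi(f)e_i, e^i)$ for a local dual basis $\{e_i\},\{e^i\}$ of $\pi^K,\wt\pi^K$; this is basis-independent (trace) and glues over the cover, giving an element of $R$ after possibly inverting one more element, i.e.\ an element of $\Frac R$ defined on a dense open containing $\Sigma$. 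Specializing at $x\in\Sigma$ commutes with all these operations — $\pi^K|_x=(\pi|_x)^K$ by local freeness and right-exactness, $Z_\pi|_x=Z_{\pi|_x}$ by Conjecture \ref{conj-2}(2), and trace commutes with base change — so $J_\pi(f)$ specializes to $J_{\pi|_x}(f)$, as required; uniqueness of the interpolating meromorphic function follows from Zariski density of $\Sigma$ and reducedness of $R$.

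I expect the main obstacle to be purely bookkeeping: ensuring that the finitely many shrinkings of $\Spec R$ (one to get $Z_\pi$, one per relevant $K$ to make $\pi^K$ and $\wt\pi^K$ locally free of constant rank, one to make the pairing perfect as a pairing of locally free modules) can be arranged to still contain $\Sigma$ and, when $f$ is fixed so only one $K$ matters, to be a single dense open — the local constancy hypothesis on $\phi_{\pi^K}$ is exactly what makes the ``constant rank'' shrinking possible near $\Sigma$, and the rest is standard commutative algebra over a reduced Noetherian ring. No new idea beyond ``a spherical character is the trace of $\pi(f)$ against the period pairing'' is needed; the content is the transport of the $Z$-statements through this trace construction.
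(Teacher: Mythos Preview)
Your proposal is correct and follows essentially the same approach as the paper: define $J_{\pi}(f)$ as the trace-style sum $\sum_i Z_\pi(\pi(f)\varphi_i,\varphi^i)$ over dual bases of $\pi^K$ and $\wt\pi^K$, use the local constancy hypothesis (via the standard ``constant fiber rank implies locally free on a reduced Noetherian base'' lemma, which the paper cites as \cite[Lemma 0FWG]{SP}) to arrange that $\pi^K$ and $\wt\pi^K$ are free with a perfect pairing after shrinking, and then observe that specialization commutes with everything. Your exposition is in fact more careful than the paper's on several points (explicitly noting basis-independence as a trace, addressing the per-$K$ shrinking, and checking $(\pi^K)|_x=(\pi|_x)^K$); the only minor quibble is that the latter identity holds simply because taking $K$-invariants is applying an idempotent and hence commutes with any base change, so ``local freeness and right-exactness'' is not really the reason.
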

\begin{proof}Take any open compact subgroup $K\subset G(F)$
and $f\in\CH(K,E)$. For the case $R=E$, one can simply take $$J_\pi(f)=\sum_i \frac{Z_\pi(\pi(f)\varphi_i, \varphi^i)}{(\varphi_i,\varphi^i)}$$
where  $\{\varphi_i\}$ (resp. $\{\varphi^i\}$) is any basis of $\pi^K$ (resp. $\pi^{\vee,K}$) such that  $(\varphi_i, \varphi^j)=0$ for any $i\neq j.$ 

For the general case, upon shrinking $X$ to an open neighborhood of $\Sigma$ if necessary and by gluing, we may
assume  $\pi^K, \wt{\pi}^K$ are free $R$-modules and the induced pairing   $(\cdot,\cdot):\ \pi^K\times\wt{\pi}^K\to R$ is perfect by  \cite[{Lemma 0FWG}]{SP} and the local constancy assumption.

Take  bases $\{\varphi_i\}$ (resp. $\{\wt{\varphi}_i\}$) of $\pi^K$
(resp. $\wt{\pi}^K$) such that $(\varphi_i,\wt{\varphi}_j)=\delta_{ij}$. Clearly
$$J_\pi(f):=\sum_i Z_\pi(\pi(f)\varphi_i,\wt{\varphi}_i)$$
  is meromorphic. Note that  for any $x\in\Sigma$, the natural map 
$$\Hom_{R}(\pi^K,R)\otimes k(x)\to \Hom_{k(x)}(\pi^K|_x,k(x))$$
is an isomorphism. By taking inductive limits, one finds the natural map $\pi^\vee|_x\to (\pi|_x)^\vee$ is an isomorphism for any $x\in\Sigma$. Consequently,  $J_\pi$ interpolates $J_{\pi|_x}(f)$, $x\in\Sigma$.
 \end{proof}
\begin{remark}Perhaps the local constancy assumption  holds for any smooth admissible finitely generated torsion-free $R[G(F)]$-module $\pi$ such that $\pi|_x$ is absolutely irreducible for any $x\in\Sigma$. For the case $G=\GL_n$, see Proposition \ref{local constancy} below.
\end{remark}

Now we consider spherical characters in the unitary Gan-Gross-Prasad case. Let $W_n$ be a Hermitian space of dimension $n \geq 2$ with respect to  a given  quadratic field extension $F^\prime/F$, and $w \in W_n$ be an anisotropic vector. Let $U_n$ be the unitary group associated to $W_n$ and 
$U_{n-1}$ the stabilizer of $w$ in $U_n$.  We will consider spherical characters for the strongly tempered spherical variety $Y:=H\backslash G$ where  $H = U_{n-1}$ embeds into $G = U_n \times U_{n-1}$ diagonally. 

In this case, Conjecture \ref{conj-2} holds by \cite[Proposition 4.13]{CF21}. Together with Proposition \ref{Imp}, one  has the following corollary.
\begin{cor}\label{GGP R}The rationality part of Conjecture \ref{conj-1} holds for $Y$.
\end{cor}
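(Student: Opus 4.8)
The plan is to deduce Corollary \ref{GGP R} directly from the combination of \cite[Proposition 4.13]{CF21} and Proposition \ref{Imp}. The first observation is that the rationality part of Conjecture \ref{conj-2} for the spherical variety $Y = U_{n-1}\backslash(U_n\times U_{n-1})$ has already been established in \cite[Proposition 4.13]{CF21}; this provides, for the relevant $R[G(F)]$-modules $\pi$ and every $x\in\Sigma$, the unique bi-$H(F)$-invariant pairing $Z_{\pi|_x}:\ \pi|_x\times\tilde\pi|_x\to k(x)$ making the defining diagram commute. Proposition \ref{Imp} then applies in the case $R=E$: it tells us that the rationality part of Conjecture \ref{conj-2} implies the rationality part of Conjecture \ref{conj-1}. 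Concretely, the character $J_{\pi|_x}$ is produced by the explicit formula
\[
J_{\pi|_x}(f)\ =\ \sum_i \frac{Z_{\pi|_x}(\pi(f)\varphi_i,\varphi^i)}{(\varphi_i,\varphi^i)},
\]
for $f\in\CH(K,k(x))$ and $\{\varphi_i\}$, $\{\varphi^i\}$ compatible bases of $\pi^K$ and $\pi^{\vee,K}$.

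The only genuine content to check is that the hypotheses of Proposition \ref{Imp} are met in the unitary Gan–Gross–Prasad situation: that is, that one is in the regime $R=E$, and that the pair $(\pi,\tilde\pi)$ together with the $G(F)$-invariant pairing required in Conjecture \ref{conj-2} exists with the fiberwise non-degeneracy at each $x\in\Sigma$. For a single $\pi|_x$ which is (after base change by some $\tau\in\CE(\pi|_x)$) irreducible and appears in $L^2(Y(F))$, one takes $\tilde\pi$ to be the smooth contragredient and $(\cdot,\cdot)$ the canonical pairing; the needed non-degeneracy is automatic for irreducible admissible representations and their contragredients. Thus the hypotheses of \cite[Proposition 4.13]{CF21} and of Proposition \ref{Imp} are both satisfied, and the rationality statement of Conjecture \ref{conj-1} follows for $Y$.

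I do not anticipate a serious obstacle here: the corollary is, by design, a formal consequence of two results already in place, and the proof amounts to checking that the hypotheses line up. If anything, the subtle point is making sure that $J_{\pi|_x}$ as defined above really is independent of the auxiliary choices (the bases $\{\varphi_i\}$, $\{\varphi^i\}$ and the pairing $(\cdot,\cdot)$) and really is a \emph{character} of $\CS(G(F),k(x))$ rather than merely a linear functional — but this is exactly the content encoded in Proposition \ref{Imp}'s construction and in the compatibility with the complex-analytic $J_{(\pi|_x)_\tau}$, which by \cite{WeZ14} and the definition in the introduction is known to be a character. Hence the proof is short: invoke \cite[Proposition 4.13]{CF21} to get the rationality of Conjecture \ref{conj-2} for $Y$, then invoke Proposition \ref{Imp} with $R=E$ to transport it to the rationality of Conjecture \ref{conj-1}, completing the argument.
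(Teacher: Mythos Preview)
Your proposal is correct and follows exactly the approach the paper itself uses: the paper states the corollary as an immediate consequence of \cite[Proposition 4.13]{CF21} (which gives Conjecture \ref{conj-2} for $Y$) together with the $R=E$ case of Proposition \ref{Imp}. Your added discussion of why the hypotheses line up (taking $\tilde\pi=(\pi|_x)^\vee$ with the canonical pairing, which is automatically non-degenerate since $\CE(\pi|_x)\neq\emptyset$ forces $\pi|_x$ to be irreducible) is a reasonable elaboration of what the paper leaves implicit.
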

However, the local constancy of fiber ranks is not known in general, even in the  global setting for our motivating applications. To avoid this issue, we consider   the quadratic  base change functoriality  $\pi\to \BBC(\pi)$   from irreducible smooth admissible complex representations over $G(F)$ to those over $G^\prime(F)$  established in \cite[Theorem 3.6.1]{Mok15} and \cite[Theorem 1.6.1]{KMSW14} so that we can apply the local constancy for families of $G^\prime$-representations. Here  $G^\prime$ is the $F$-algebraic group $\Res_{F^\prime/F}(\GL_{n}\times\GL_{n-1})$. 

Fix an algebraic closure $\bar{E}$ of $E$ and a field embedding $\tau:\ \bar{E}\to \BC$. Our main result is the following:
\begin{thm}\label{GGP case} Assume that
\begin{enumerate}[(i)]
    \item  For any $x\in\Sigma$, there exists an absolutely irreducible smooth admissible  $G^\prime(F)$-representation $\BBC(\pi|_x)$  over $k(x)$ such that   $\BBC(\pi|_x)_\tau\cong \BBC((\pi|_x)_\tau)$;
    \item  There exists a smooth admissible finitely generated torsion-free $R[G^\prime(F)]$-module $\BBC(\pi)$ such that  $\BBC(\pi|_x)\cong\BBC(\pi)|_x$ for all $x\in\Sigma$.
\end{enumerate}
Then  Conjecture \ref{conj-2} holds for $\pi$. 
\end{thm}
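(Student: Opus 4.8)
The plan is to adapt the argument of \cite[Proposition 4.13]{CF21} for the unitary Gan--Gross--Prasad variety $Y$, the one new point being to route the control of the family over $R$ through the general linear side, where the local constancy of fibre ranks holds (Proposition~\ref{local constancy}); the passage between the two sides is the quadratic base change $\pi\mapsto\BBC(\pi)$ together with the comparison of canonical local periods. Hypotheses (i) and (ii) are exactly what is needed to have at hand a base-change family $\BBC(\pi)$ over $R$ with controlled fibres, compatibly with the fixed embedding $\tau$.

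I would first recall the shape of the canonical local period on $Y$. For a tempered $\sigma=\sigma_n\boxtimes\sigma_{n-1}$ appearing in the Plancherel decomposition of $L^2(Y(F))$, strong temperedness makes $P_\sigma(\varphi,\varphi^\vee)=\int_{U_{n-1}(F)}\langle\sigma(h)\varphi,\varphi^\vee\rangle\,dh$ absolutely convergent, and $Z_\sigma=\beta_\sigma\cdot P_\sigma$ for an explicit normalising scalar $\beta_\sigma$ given by a ratio of local $L$- and $\epsilon$-factors of the base change $\BBC(\sigma)$ (the Ichino--Ikeda normalisation in the unitary Gan--Gross--Prasad case). Thus building $Z_\pi$ over $R$ breaks into (a) producing a family version of $P_\sigma$ and (b) producing $\beta_\pi\in\Frac R$ interpolating the $\beta_{(\pi|_x)_\tau}$, $x\in\Sigma$. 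Part (b) is immediate from hypothesis (ii): $\beta_\pi$ is assembled from the Rankin--Selberg $L$-factor of $\BBC(\pi)$ and the adjoint $L$-factors of the family, which are meromorphic in the $R$-parameter by the standard theory of $\GL$-families, and by hypothesis (i) it specialises to $\beta_{(\pi|_x)_\tau}$ at each $x$.

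For part (a) I would invoke the local period comparison. Let $Y^\prime=H^\prime\backslash G^\prime$ be the (again strongly tempered) Rankin--Selberg spherical variety, with $H^\prime=\Res_{F^\prime/F}\GL_{n-1}$ suitably embedded in $G^\prime=\Res_{F^\prime/F}(\GL_n\times\GL_{n-1})$. By Proposition~\ref{local constancy} the functions $\phi_{\BBC(\pi)^{K^\prime}}$ are locally constant on $\Sigma$ for all open compact $K^\prime\subset G^\prime(F)$, so the argument of \cite[Proposition 4.13]{CF21}, applied on the $\GL$-side (where the canonical period has a Rankin--Selberg integral representation), goes through for $\BBC(\pi)$ without any further hypothesis and produces, after shrinking $\Spec R$ around $\Sigma$, a meromorphic pairing $Z_{\BBC(\pi)}\colon\BBC(\pi)\times\wt{\BBC(\pi)}\to R$ with the required specialisations. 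The local Jacquet--Rallis identity (Beuzart-Plessis, W.~Zhang) in the tempered case expresses $P_\sigma$ through the $\GL$-side period of $\BBC(\sigma)$ up to one more explicit $L$-/$\epsilon$-factor; feeding in $Z_{\BBC(\pi)}$, pulling back along the base-change map on Hecke modules and correcting by $\beta_\pi$ and this factor, one obtains the desired $Z_\pi\colon\pi\times\tilde\pi\to R$. The commuting specialisation squares of Conjecture~\ref{conj-2} for $\pi$ then follow from those for $\BBC(\pi)$, hypothesis (i) furnishing the compatibility $\BBC(\pi|_x)_\tau\cong\BBC((\pi|_x)_\tau)$.

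The main obstacle, I expect, is making the local period comparison precise and \emph{uniform over $\Sigma$}: one must pin down every transfer constant as a genuine ratio of $L$- and $\epsilon$-factors of $\BBC(\sigma)$ rather than merely ``up to a scalar''; check that the $\GL$-side period actually occurring --- which in the Jacquet--Rallis picture pairs a Rankin--Selberg functional against a Flicker--Rallis one, not against its own contragredient --- can be reorganised into a bi-$H^\prime(F)$-invariant $R$-pairing to which the argument applies; and verify that $\BBC$ is compatible with contragredients and with the given pairing $(\cdot,\cdot)$ at the level of $R$-families. A routine secondary point is that the several shrinkings of $\Spec R$ (for $Z_{\BBC(\pi)}$, for $\beta_\pi$, for the descent) can be taken simultaneously, since $\Sigma$ is Zariski dense.
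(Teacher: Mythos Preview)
Your proposal has a genuine gap at the crucial step. You write ``pulling back along the base-change map on Hecke modules'' to transport $Z_{\BBC(\pi)}$ to a pairing on $\pi\times\tilde\pi$, but no such map exists: quadratic base change $\sigma\mapsto\BBC(\sigma)$ is a correspondence of isomorphism classes of representations (equivalently, of $L$-parameters), not a functor on modules. There is no natural way to send a vector $\varphi\in\pi$ to a vector in $\BBC(\pi)$, and hence no way to evaluate an $H'$-invariant pairing on $\BBC(\pi)$ at vectors coming from $\pi$. Your ``main obstacle'' paragraph touches on this but treats it as book-keeping; it is in fact the heart of the problem, and the Jacquet--Rallis/Beuzart-Plessis comparison you invoke is precisely a comparison of \emph{distributions} (spherical characters, summed over a basis), not of periods evaluated on individual vectors.

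The paper's proof takes a different route and in fact establishes Conjecture~\ref{conj-1} (the spherical-character version), which is what the whole paper is about despite the statement's literal reference to Conjecture~\ref{conj-2}. The bridge between the unitary and general-linear sides is the Jacquet--Rallis smooth transfer of \emph{test functions} $f\leftrightarrow f'$ together with the Beuzart-Plessis character identity $J_\sigma(f)\doteq I_{\BBC(\sigma)}(f')$. Concretely: one interpolates the $\GL$-side spherical character $I_{\BBC(\pi)}$ over $R$ using the Rankin--Selberg and Asai zeta integrals in families (Propositions~\ref{Mero RS} and~\ref{Mero As}) together with Proposition~\ref{Imp}, the latter being applicable because local constancy holds on the $\GL$-side (Proposition~\ref{local constancy}); one proves that pure smooth transfer exists for $E$-valued Schwartz functions (Proposition~\ref{RST}); and then one simply \emph{defines} $J_\pi(f):=I_{\BBC(\pi)}(f')$ for any such transfer $f'$. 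The character identity, applied fibre-by-fibre over $\Sigma$, shows this is independent of the choice of $f'$ and interpolates $J_{\pi|_x}$; the rationality already in hand (Corollary~\ref{GGP R}) then forces $J_\pi(f)\in\Frac R$ rather than merely $\Frac R\otimes_E E_\psi$. The point is that transfer of test functions does make sense over the base, whereas transfer of vectors does not.
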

In the interested global setting, one usually takes $E=\bar{\BQ}_p$ and  $\tau$ to be an isomorphism. Then Item (i) is automatic and  Item (ii)  can be deduced from the existence of family of Galois representations, 
local-global compatibility and the local Langlands correspondence in family.
 
Under the assumption on base change functoriality, we approach Theorem \ref{GGP case} as following:
\begin{enumerate}[(Step 1)]
	\item  Apply  the spherical character identity of  Beuzart-Plessis (See \cite{BP20, Iso})
\[J_\pi(f) \stackrel{.}{=} I_{\BBC(\pi)}(f')\]
for any $f\in \CS(G(F),\BC)$ and 
$f'\in \CS(G^\prime(F),\BC)$
with purely matching orbital integrals to transfer $J_\pi$ to the spherical character  $I_{\BBC(\pi)}$ for the Rankin-Selberg and Asai
zeta integrals. 
To apply the character identity,  the existence of smooth transfer for  $E$-valued
Schwartz functions is needed (See Proposition \ref{RST} below).
	\item Apply the theory of co-Whittaker modules to establish
the meromorphy property of Rankin-Selberg and Asai zeta integrals in families (initialed in \cite{Mos16G}, see Proposition \ref{Mero RS} and \ref{Mero As} below). 

To deduce the meromorphy of $I_{\BBC(\pi)}$, the local constancy of fiber rank in the $\GL_n$-case is needed (See Proposition \ref{local constancy} below).
\end{enumerate}

\s{\bf Acknowledgement} We express our sincere gratitude to Prof. Y. Tian for his consistent encouragement. We thank  Prof. A. Burungale for inspiring discussions.

L. Cai is partially supported by NSFC grant No.11971254.

\section{The unitary Gan-Gross-Prasad case}
In this section, we establish Theorem \ref{GGP case}. Throughout, 
\begin{itemize}
    \item let $\CO\subset F$ be the ring of integers and $q$ be the cardinality of the residue field of $\CO$;
    \item  fix an unramified nontrivial additive character $\psi:\ F\to\bar{E}^\times$ and set $E_\psi:=E(\psi(a)| a\in F)$;
    \item all measures on $F$-points of $F$-groups are Haar measures such that volumes of open compact subgroups are rational numbers. 
\end{itemize}  
To simplify notations, we assume $E$ contains a square root $\sqrt{q}$ of $q$. Otherwise, we can first work on $E(\sqrt{q})$ and then  descent by Corollary \ref{GGP R}.

  \subsection{Zeta integral in  families}
We start with the meromorphy property of Rankin-Selberg and Asai zeta integral in  families. 
	Set $G_n:=\GL_n(F)$ and let $N_n\subset P_n\subset G_n$ be the upper-triangular unipotent subgroup and the mirabolic subgroup respectively.    Then by \cite[Section 3.1]{EH14},  there are 
	\begin{itemize}
		\item the functor  $\pi\mapsto\CJ(\pi)$ from smooth $R[G_n]$-modules to $R[P_n]$-modules
		\item  the 
		functor $\pi\mapsto\pi^{(n)}$ from  smooth $R[G_n]$-modules to  $R$-modules
	\end{itemize}
	such that for any $R$-module $M$, $$\CJ(\pi\otimes_{R} M)=\CJ(\pi)\otimes_{R} M,\quad (\pi\otimes_{R}M)^{(n)}=\pi^{(n)}\otimes_{R}M.$$
	Extend the additive character $\psi$  to $N_n$ by $n\in N_n\mapsto \psi(\sum_{i=1}^{n-1}n_{i,i+1})$. Then after base change to $R\otimes_{E}E_\psi$ (see \cite[Proposition 4.1.2]{Dis20}), $$\pi^{(n)}=\pi/\pi(N_n,\psi),\quad \pi(N_n,\psi):=\langle n\cdot v-\psi(n)v\mid n\in N_n,\ v\in\pi\rangle.$$
\begin{defn}	A smooth admissible $R[G_n]$-module $\pi$ is   {\em of Whittaker type} if the $R$-module $\pi^{(n)}$ is  locally free  of rank one.  A $R[G_n]$-module $\pi$ of Whittaker type is called {\em co-Whittaker} if $\sigma^{(n)}\neq0$ for any non-zero $R[G_n]$-quotient $\sigma$ of $\pi$, or equivalently $\CJ(\pi)$ generates $\pi$ (see \cite[Lemma 4.2.1]{Dis20}).

 For any smooth admissible $R[G_n]$-module $\pi$ of Whittaker type, the {\em space of Whittaker functions} $\CW(\pi,\psi)$ of $\pi$ is the image of the  map 
$$(\pi^{(n)})^*\otimes_{R}\pi\otimes_{E}E_\psi\lra (\Ind_{N_n}^{G_n}E_\psi)\otimes_{E}R$$ 
induced by the canonical isomorphism of $R\otimes_EE_\psi$-modules
$$(\pi^{(n)}\otimes_{E}E_\psi)^*\cong \sheafhom_{G_n}(\pi\otimes_{E}E_\psi,\Ind_{N_n}^{G_n}E_\psi\otimes_{E}R).$$
Here in $\Ind_{N_n}^{G_n}E_\psi$, $N_n$ acts on $E_\psi$ via $\psi$.  Note that for any closed point $x\in X=\Spec(R)$, there is a natural surjection $\CW(\pi,\psi)|_x\to \CW(\pi|_x,\psi)$ which becomes an isomorphism when $\pi|_x$ is irreducible.
\end{defn} 
	For any  smooth $R[G_n]$-module $\pi$ and any standard parabolic $P=MN\subset G_n$ with  Levi factor $M$, the Jacquet module
	$J_M(\pi):= \pi/\langle n\cdot v-v\mid n\in N,\ v\in\pi\rangle$ is a smooth  $R[M]$-module.
\begin{prop}\label{finite}If $\pi$ is co-Whittaker,  $J_M(\pi)$ is finitely generated  and admissible. In particular, the $R$-module $\End_{R[M]}(J_M(\pi))$ is coherent.
	\end{prop}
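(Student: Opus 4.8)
The plan is to reduce everything to the corresponding statement for co-Whittaker modules over a single $\GL_m$, and then invoke the known structure theory. First I would recall the geometric lemma / Bernstein--Zelevinsky filtration: for a standard parabolic $P = MN \subset G_n$, iterating the functors $\pi \mapsto \CJ(\pi)$ and $\pi \mapsto \pi^{(n)}$ along the mirabolic chain expresses $J_M(\pi)$ (up to finite filtration) in terms of the derivatives $\pi^{(k)}$ of $\pi$, each of which is again a smooth admissible $R[G_{n-k}]$-module. The co-Whittaker hypothesis is exactly what propagates through this: by \cite[Lemma 4.2.1]{Dis20} $\CJ(\pi)$ generates $\pi$, and one checks that the successive quotients appearing in the filtration of $J_M(\pi)$ are (twists of) the derivatives $\pi^{(k)}$, which are themselves finitely generated smooth admissible $R[G_{n-k}]$-modules by the standard Bernstein--Zelevinsky formalism extended to the $R$-coefficient setting in \cite[Section 3.1]{EH14}. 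Since a finite filtration by finitely generated admissible $R[M]$-modules is again finitely generated and admissible, this gives the first two assertions.

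More concretely, the key steps in order would be: (1) reduce to the case $P = P_n$-type parabolics and then to a single block $\GL_m$ by induction on the number of blocks of $M$, using that $J_M$ is a composite of Jacquet functors for maximal parabolics and that $\CJ$, $(\cdot)^{(n)}$ commute with such base changes (the displayed compatibility $\CJ(\pi \otimes_R M) = \CJ(\pi) \otimes_R M$ etc.); (2) for a maximal parabolic $P = MN$ with $M = \GL_a \times \GL_b$, use the Bernstein--Zelevinsky filtration of $J_M(\pi)$ whose graded pieces are built from the derivatives $\pi^{(i)}$ tensored with appropriate representations of the smaller group; (3) invoke finite generation and admissibility of each $\pi^{(i)}$ (they inherit these properties from $\pi$ because the derivative functors are exact and take finitely generated admissibles to finitely generated admissibles over Noetherian $R$, cf. \cite[Section 3.1]{EH14}); (4) conclude that $J_M(\pi)$, having a finite filtration by such modules, is finitely generated and admissible.

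For the final sentence — coherence of $\End_{R[M]}(J_M(\pi))$ as an $R$-module — I would argue as follows. Since $J_M(\pi)$ is a finitely generated smooth admissible $R[M]$-module and $R$ is Noetherian, pick a finite generating set contained in $J_M(\pi)^{K'}$ for some open compact $K' \subset M$; then an $R[M]$-endomorphism is determined by its restriction to $J_M(\pi)^{K'}$, which is a finitely generated $R$-module by admissibility. This realizes $\End_{R[M]}(J_M(\pi))$ as an $R$-submodule of $\Hom_R(J_M(\pi)^{K'}, J_M(\pi)^{K''})$ for suitable $K''$ (one large enough that the image of each generator lands in it), and the latter is a finitely generated $R$-module since both source and target are finitely generated over Noetherian $R$. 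An $R$-submodule of a finitely generated module over a Noetherian ring is finitely generated, hence coherent.

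The main obstacle I anticipate is step (2)–(3): making the Bernstein--Zelevinsky / geometric-lemma description of $J_M(\pi)$ genuinely work with general Noetherian $R$-coefficients and tracking that the co-Whittaker property is preserved (or at least that finite generation and admissibility are preserved) along the filtration. Over a field this is classical, but in the family setting one must be careful that the derivative functors remain exact and that "admissible" is stable under the relevant induction and Jacquet operations; fortunately \cite[Section 3.1]{EH14} and \cite[Section 4]{Dis20} provide exactly this machinery, so the argument should go through without genuinely new input, and the endomorphism-coherence statement is then a routine Noetherian consequence.
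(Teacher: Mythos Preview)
Your argument for finite generation is correct but more elaborate than necessary: the paper simply notes that a co-Whittaker module is finitely generated (this is \cite[Lemma 2.29]{Mos16}), and Jacquet functors trivially preserve finite generation since images of generators generate the coinvariants. Your coherence argument at the end is also correct and is essentially a reproof of \cite[Lemma 4.1.1]{Dis20}, which is exactly what the paper cites for that step.

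The real gap is in the admissibility claim. You propose to show $J_M(\pi)$ is admissible by filtering it via the Bernstein--Zelevinsky/geometric lemma and then asserting that the graded pieces, built from the derivatives $\pi^{(i)}$, are finitely generated and admissible over $R$. But the references you cite do not establish this: \cite[Section 3.1]{EH14} sets up the derivative functors and proves their base-change compatibility, and \cite[Section 4]{Dis20} develops the co-Whittaker formalism, but neither proves that derivatives (or Jacquet modules) of admissible $R[G_n]$-modules remain admissible when $R$ is an arbitrary Noetherian ring. Indeed, you yourself flag that ``one must be careful that \ldots\ `admissible' is stable under the relevant induction and Jacquet operations'' --- this stability is exactly the nontrivial input. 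Over a field it follows from Bernstein's second adjointness, but over general Noetherian coefficients it is a genuine theorem, proved for $\GL_n$ in \cite[Theorems 10.7 and 10.9]{Hel16b} and in full generality in \cite[Corollary 1.5]{DHK22}. The paper's proof is therefore not a detour around a direct argument but an invocation of precisely the hard result your sketch would need; without it, your filtration strategy is circular, since controlling the admissibility of the graded pieces already requires knowing that coinvariant-type functors preserve admissibility for smaller Levi blocks.
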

	\begin{proof}By \cite[Lemma 2.29]{Mos16}, $\pi$ is  finitely generated and consequently, $J_M(\pi)$ is finitely generated. The admissibility of $J_M(\pi)$ is a special case of \cite[Corollary 1.5]{DHK22}, see also  \cite[Theorem 10.7 \& 10.9]{Hel16b}. By \cite[lemma 4.1.1]{Dis20}, the  coherence of $\End_{R[M]}(J_M(\pi))$ follows. 
	   	\end{proof}

Now we  consider the Rankin-Selberg  zeta integral in families.  Take positive integers $m\leq n$ and let $\pi_1$ and  $\pi_2$ be smooth admissible $R[G_n]$-module and $R[G_m]$-module of Whittaker type respectively. 
For  $W_1\in\CW(\pi_1,\psi)$, $W_2\in \CW(\pi_2,\psi^{-1})$ and $\Phi\in \CS(F^n,E)\otimes_ER$, consider the following formal  series
$$J_{RS}(W_1,W_2,T):=\sum_{j\in\BZ}J^j(W_1,W_2)T^j \quad Z_{RS}(W_1,W_2,(\Phi),T):=\sum_{j\in\BZ}Z_{RS}^j(W_1,W_2,(\Phi))T^j$$
in the variable $T$ when $m=n$ and $m=n-1$ where
\begin{itemize}
	\item for $m=n$,   $$J_{RS}^j(W_1,W_2):=\int_{N_{n-1}(F)\backslash \GL_{n-1}^j(F)}W_1(g)W_2(g) dg,$$  $$Z_{RS}^j(W_1,W_2,\Phi):=\int_{N_n(F)\backslash \GL_{n}^j(F)}W_1(g)W_2(g)\Phi(e_ng) dg $$
	with $F^n$ viewed as row vectors and $e_n=(0,\cdots,0,1)$,
	\item for $m=n-1$, $$ Z_{RS}^j(W_1,W_2):=\int_{N_{m}(F)\backslash \GL_{m}^j(F)}W_1(\begin{pmatrix}g & 0\\ 0 & 1\end{pmatrix})W_2(g)dg.$$
\end{itemize}  
Here for any subgroup $H\subset G_n$ and any $j\in\BZ$, $H^j:=\{g\in H\mid \val_F(\det(g))=j\}$.   Note that by the Iwasawa decomposition,  the integrals $Z_{RS}^j(W_1,W_2,(\Phi))$ and $J_{RS}^j(W_1,W_2)$) are  actually  finite sum for each $j$, and hence  $Z_{RS}(W_1,W_2,(\Phi),T)$ and $J_{RS}(W_1,W_2,T)$)  are well-defined. Moreover when $X=\Spec(\BC)$ and $\Re(s)\gg0$,  the substitution $T=q^{-(s-\frac{n-m}{2})}$ in  $Z_{RS}(W_1,W_2,(\Phi),T)$ gives the usual Rankin-Selberg Zeta integral.

\begin{prop}\label{Mero RS} Let $S\subset R[T]$ be the  multiplicative subset consisting of polynomials whose leading and trailing coefficients are units. Let $\pi_1$ and $\pi_2$ be   co-Whittaker $R[G_n]$ and $R[G_m]$)-modules respectively.   Then for any $W_1\in\CW(\pi_1,\psi)$, $W_2\in\CW(\pi_2,\psi^{-1})$ and $\Phi\in \CS(F^n,E)\otimes_ER$, $Z_{RS}(W_1,W_2,(\Phi),T)$ and  $J(W_1,W_2,T)$  belong to $S^{-1}(R[T,T^{-1}])\otimes_EE_\psi$.
\end{prop}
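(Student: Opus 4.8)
The plan is to reduce the statement to a rationality property of a single zeta integral that can be read off from the structure theory of co-Whittaker modules, namely Proposition \ref{finite}. The key observation is that the formal series $J_{RS}(W_1,W_2,T)$ and $Z_{RS}(W_1,W_2,(\Phi),T)$ are, in each degree $j$, finite $R$-linear combinations of values of Whittaker functions (this is the Iwasawa-decomposition remark already in the text), so each coefficient lies in $R\otimes_E E_\psi$ and the series are honest Laurent-type series with coefficients in $R\otimes_E E_\psi$. What must be shown is that, after inverting $S$, the series becomes a genuine element of $S^{-1}(R[T,T^{-1}])\otimes_E E_\psi$, i.e.\ that there is a single polynomial $P(T)\in S$ (independent of $W_1,W_2,\Phi$, or at least one that can be chosen so after clearing denominators) such that $P(T)\cdot J_{RS}(W_1,W_2,T)$ and $P(T)\cdot Z_{RS}(W_1,W_2,(\Phi),T)$ are Laurent polynomials over $R\otimes_E E_\psi$.

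First I would reduce to the generic fiber and the Whittaker model: by base change compatibility of $\CJ(-)$ and $(-)^{(n)}$ with $-\otimes_R M$, and the fact that $\CW(\pi_i,\psi)$ is built functorially, it suffices to produce the annihilating polynomial $P(T)$ working with the $R\otimes_E E_\psi$-module generated by the Whittaker functions. Second, I would exhibit $T$-recursions for the coefficients. Concretely, translating $W_1$ by the diagonal torus element with a uniformizer in the appropriate block shifts the index $j$; combined with the finiteness and admissibility of the relevant Jacquet modules $J_M(\pi_i)$ from Proposition \ref{finite}, one gets that the sequence $(J^j)_{j}$ (resp.\ $(Z^j_{RS})_j$) satisfies a linear recurrence over $R\otimes_E E_\psi$ with constant coefficients for $j\gg 0$ and for $j\ll 0$, whose characteristic polynomials have unit leading and trailing coefficients — this is exactly because the relevant endomorphism algebra $\End_{R[M]}(J_M(\pi_i))$ is coherent (again Proposition \ref{finite}) and the action of the uniformizer torus element on the (finitely generated) Jacquet module is invertible up to a unit power of $q$. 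Packaging the two one-sided recurrences into one polynomial $P(T)$ whose roots are the reciprocals of these eigenvalues, and noting $P(T)\in S$, yields the claim. For $J(W_1,W_2,T)$ in the $m=n$ case one argues identically using $J_M$ for $\GL_{n-1}$ inside $\GL_n$; for the $m=n-1$ case one uses the Jacquet module of $\pi_1$ along the $(n-1,1)$ parabolic.

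The main obstacle I anticipate is the rationality of the ``bad'' coefficients — i.e.\ uniformly controlling the tails $j\to\pm\infty$ with a \emph{single} $P(T)$ that has both leading and trailing coefficients units, rather than merely showing each fixed $W_1,W_2,\Phi$ gives a rational function. This is where coherence of $\End_{R[M]}(J_M(\pi))$ and admissibility are essential: they guarantee the asymptotic expansion of Whittaker functions has only finitely many ``exponents'' (generalized eigenvalues of the torus action on the Jacquet module), with uniform bounds over the family, so the denominator can be chosen once and for all on a neighborhood of $\Sigma$ in $\Spec(R)$. A secondary technical point is the $F^\prime/F$-twist-by-$E_\psi$ bookkeeping: one should carry out the recurrence argument after base change to $R\otimes_E E_\psi$ as the text already does for $\pi^{(n)}$, and then observe that since $J^j$ and $Z^j_{RS}$ are defined over $R$ up to the $\psi$-twist inherent in the Whittaker functions, the resulting rational function lands in $S^{-1}(R[T,T^{-1}])\otimes_E E_\psi$ as stated. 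I expect the substitution $T=q^{-(s-(n-m)/2)}$ compatibility with the classical case to be a routine check once the formal identity is in place.
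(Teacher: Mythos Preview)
Your proposal is essentially the paper's approach: reduce via Iwasawa to torus integrals, then use the coherence of $\End_{R[M]}(J_M(\pi))$ from Proposition~\ref{finite} to produce a polynomial in $S$ annihilating the torus-shift action, which translates into a recurrence forcing rationality. Two points where the paper is sharper than your sketch: first, one needs a separate polynomial $f_i\in S$ for \emph{each} standard Levi $M_i=G_i\times G_{n-i}$, $1\le i\le n-1$ (not a single $(n-1,1)$ parabolic), since there are $n-1$ independent torus directions $a_1,\dots,a_{n-1}$ to control; second, in the $m=n$ case the paper first factors $Z'(W_1,W_2,\Phi,T)=J'(W_1,W_2,T)\cdot Z(\chi,\Phi_n,T^n)$ via the central characters, so that the extra central variable $a_n$ (which Whittaker vanishing does not bound) is handled by an explicit Tate integral rather than by a Jacquet-module argument. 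Your two-sided recurrence is also unnecessary: Whittaker-function vanishing already makes the series Laurent, so only the $j\to+\infty$ tail requires work.
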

\begin{proof} The case $Z_{RS}(W_1,W_2,T)$ with $m=n-1$ is treated in \cite[Theorem 3.2]{Mos16G}. We now deal with  $Z_{RS}(W_1,W_2,\Phi,T)$ and  $J(W_1,W_2,T)$ when $n=m$.
	
Let $A_n\subset G_n$ be the subgroup of diagonal matrices and equip $A_n$ with the coordinates
$$(F^\times)^n\xrightarrow{\sim} A_n;\quad a=(a_1,\cdots,a_n)\mapsto \diag\{a_1\cdots a_n,a_2\cdots a_n,\cdots,a_n\}.$$
Then (see  \cite[Lemma 3.2]{Mos16})  for any $W\in\CW(\pi,\psi)$, there exists a constant $C$  such that $W(a)=0$ unless $\val_F(a_i)>-C$ for $i=1,\cdots,n-1$.
Then  $Z(W_1,W_2,\Phi,T)$ and $J(W_1,W_2,T)$ are  actually formal Laurent series. Moreover by the Iwasawa decomposition, it suffices to show the formal  Laurent series 
	\begin{align*}Z^\prime(W_1,W_2,\Phi,T):&=\sum_{j\in\BZ}\big(\int_{A_n^j(F)}W_1(a)W_2(a)\Phi(e_n a)da\big)T^j,\\
	J^\prime(W_1,W_2,T):&=\sum_{j\in\BZ}\big(\int_{A_{n-1}^j(F)}W_1(\diag\{a^\prime,1\})W_2(\diag\{a^\prime,1\})da^\prime\big) T^j
	\end{align*}
	belong to  $S^{-1}(R[T,T^{-1}])\otimes_EE_\psi$. 

By \cite[Proposition 6.2]{Hel16a}, $\pi_i$ admits a central character $\chi_i$ for $i=1,2$. 	Let  
	$$Z(\chi,\Phi_n,T):=\sum_{j\in\BZ}\big(\int_{\varpi^j\CO_F^\times}\chi(x)\Phi_n(x)dx\big)T^{j}$$
	where $\Phi_n(x):=\Phi(0,\cdots,0,x)\in\CS(F,E)\otimes_ER$ and $\chi:=\chi_1\chi_2$.
	Then in $R[[T]][T^{-1}]\otimes_EE_\psi$, 
	$$Z^\prime(W_1,W_2,\Phi,T)=J^\prime(W_1,W_2,T)Z(\chi,\Phi_n,T^n).$$
	Straightforward computation shows $$Z(\chi,\Phi_n,T^n)\in S^{-1}(R[T,T^{-1}])\otimes_EE_\psi.$$ Hence we only need to show   $$J^\prime(W_1,W_2,T)\in S^{-1}(R[T,T^{-1}])\otimes_EE_\psi.$$

	Consider the coordinates 
	$$(F^\times)^{n-1}\xrightarrow{\sim} A_{n-1};\quad a^\prime=(a_1,\cdots,a_{n-1})\mapsto \diag\{a_1\cdots a_{n-1},a_2\cdots a_{n-1},\cdots,a_{n-1}\}.$$
	Let $\CV\subset C^\infty(A_{n-1},R\otimes_E E_\psi)$ be the subspace generated by functions of the form
	$$a^\prime\mapsto W_1(\diag\{a^\prime,1\})W_2(\diag\{a^\prime,1\}), W_1\in\CW(\pi_1,\psi),\ W_2\in\CW(\pi_2,\psi^{-1})$$
 and $\CV_i\subset\CV$ be the subspace of functions $\phi$ satisfying that $\phi(a)=0$ when $\val_F(a_i)\geq C$ for some constant $C$ (depending on $\phi$) for each $i\leq{n-1}$. Moreover, let $M_i\subset G_n$ be the standard Levi subgroup $G_{i}\times G_{n-i}$ for $1\leq i\leq {n-1}$. Then similar as \cite[Lemma 3.3-3.5]{Mos16G}, the map 
	$$\CW(\pi_1,\psi)\otimes 	\CW(\pi_2,\psi^{-1})\to \CV\to\CV/\CV_i$$
	factors through $J_{M_i}(\CW(\pi_1,\psi))\otimes J_{M_i}(\CW(\pi_2,\psi^{-1}))$. 	Let 
	$\rho_i(\varpi)$ be the right translation by $\diag\{\varpi,\cdots,\varpi,1,\cdot,1\}$ (the first $i$ entities are $\varpi$) acting diagonally on $\pi_1\otimes\pi_2$. Then $\rho_i(\varpi)$ induce an element in  $\End_{R[M_i\times M_i]}(J_{M_i}(\pi_1)\otimes J_{M_i}(\pi_2))$, which is coherent by Proposition \ref{finite}. Then  as \cite[Lemma 3.6]{Mos16G}, there exist polynomials $f_i\in S$, $1\leq i\leq n-1$ such that 
	$$\big(\prod_{i=1}^{n-1}f_i(\rho_i(\varpi))\big)(W_1\otimes W_2)\mid_{A_{n-1}}\in\cap_{i=1}^{n-1} \CV_i.$$
	Consequently, we have $$J^\prime\big((\prod_{i=1}^{n-1}f_i(\rho_i(\varpi)))W_1, (\prod_{i=1}^{n-1}f_i(\rho_i(\varpi)))W_2,T\big)\in R[T,T^{-1}]\otimes_EE_\psi.$$
	Note that 
	$$T^i J^\prime(\rho_i(\varpi)W_1, \rho_i(\varpi)W_2,T)=J^\prime(W_1, W_2,T),$$ thus one deduces
	$J^\prime(W_1,W_2,T)\in S^{-1}(R[T,T^{-1}])\otimes_EE_\psi$. We are done.
\end{proof}

Take $x\in \Sigma$ and assume there exists a field embedding  $\tau:\ k(x)\hookrightarrow\BC$ such that $(\pi|_x)_\tau$ is essentially unitary and irreducible.
Then by \cite{JS81}, for any ring morphism $\tau:\ k(x)\otimes_E E_\psi\to \BC$ extending $\tau$,  the pairing given by  absolutely convergent integration	$$\CW(\pi|_x,\psi)\otimes_{k(x)\otimes_E E_\psi,\tau}\BC\times\CW(\pi|_x,\psi^{-1})\otimes_{k(x)\otimes_E E_\psi,\tau}\BC\to\BC$$
	$$ (W,\wt{W})\mapsto \int_{N_{n-1}\backslash G_{n-1}}W(g)\wt{W}(g)dg= \sum_{j\in\BZ}J_{RS}^j(W,\wt{W})$$
 	is  $G_n$-equivariant and non-degenerate.
Consequently, 
\begin{itemize}
	\item $T=1$ is not a pole of $J_{RS}(W_x,\wt{W}_x,T)$ for any $W_x\in\CW(\pi|_x,\psi)$ and $\wt{W}_x\in\CW(\wt{\pi}|_x,\psi^{-1})$;
	\item the pairing 
	$$\langle-,-\rangle_{x}:\ \CW(\pi|_x,\psi)\times \CW(\wt{\pi}|_x,\psi^{-1})\to k(x)\otimes_EE_\psi;\quad (W_x,\wt{W}_x)\mapsto J_{RS}(W_x,\wt{W}_x,1)$$
	is $G_n$-equivariant and non-degenerate.
\end{itemize}
From these observations, we immediately deduce the following corollary of Proposition \ref{Mero RS}.
\begin{cor}\label{InvWM}Let $\pi$ be a  co-Whittaker $R[G_n]$-module such that for any $x\in\Sigma$, $(\pi|_x)_\tau$ is essentially unitary and irreducible for some field
embedding $\tau:\ k(x)\hookrightarrow\BC$. Then there exists an open subset $X^\prime\subset \Spec(R)$ containing $\Sigma$ such that for any $W\in\CW(\pi,\psi)$ and $\wt{W}\in \CW(\wt{\pi},\psi^{-1})$, $J_{RS}(W,\tilde{W},1)$ is regular on $X^\prime\times_E E_\psi$. 
	Moreover,  the  pairing 	
	$$\langle-,-\rangle:\ \CW(\pi|_{X^\prime},\psi)\times\CW(\tilde{\pi}|_{X^\prime},\psi^{-1})\to\CO_{X^\prime}\otimes_EE_\psi;\quad (W,\wt{W})\mapsto J_{RS}(W,\wt{W},1)$$
	is $G_n(F)$-invariant and	interpolates $\langle-,-\rangle_{x}$ for any $x\in\Sigma$.
\end{cor}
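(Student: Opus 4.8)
The plan is to combine Proposition~\ref{Mero RS} with the two observations recorded just above by a spreading‑out argument over the reduced base $\Spec R$. Fix $W\in\CW(\pi,\psi)$ and $\wt W\in\CW(\wt\pi,\psi^{-1})$. By Proposition~\ref{Mero RS} in the case $m=n$, $J_{RS}(W,\wt W,T)\in S^{-1}(R[T,T^{-1}])\otimes_EE_\psi$. The first thing I would note is that the denominator can be chosen uniformly in $W$ and $\wt W$: in the proof of Proposition~\ref{Mero RS} it is assembled from the polynomials $f_i\in S$ produced, via Proposition~\ref{finite}, from the operators $\rho_i(\varpi)$ acting on the coherent $R$‑algebras $\End_{R[M_i\times M_i]}(J_{M_i}(\pi)\otimes J_{M_i}(\wt\pi))$, together with the finitely many right $K$‑translates coming from the Iwasawa decomposition, and none of these data involve $W$ or $\wt W$. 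Hence there is a single $Q_0\in S$ with $P_{W,\wt W}(T):=Q_0(T)\,J_{RS}(W,\wt W,T)\in R[T,T^{-1}]\otimes_EE_\psi$ for all $W,\wt W$.

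Next, since $T-1$ is monic I remove its maximal power, writing $Q_0=(T-1)^{e}Q_0'$ with $e$ maximal (so $Q_0'\in S$ too, its extreme coefficients differing from those of $Q_0$ only by a sign) and $(T-1)\nmid Q_0'$ in $R[T]$, whence $Q_0'(1)\neq0$ in $R$. For $x\in\Sigma$ the representation $\pi|_x$ is irreducible (as $(\pi|_x)_\tau$ is), so $\CW(\pi,\psi)|_x\simeq\CW(\pi|_x,\psi)$, and since the partial integrals $J_{RS}^j$ are finite sums they commute with specialization, giving $J_{RS}(W,\wt W,T)|_x=J_{RS}(W_x,\wt W_x,T)$; by the first observation this is regular at $T=1$, so $(T-1)^{e}\mid P_{W,\wt W}|_x$ for every $x\in\Sigma$. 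As $R\otimes_EE_\psi$ is reduced and the preimage of $\Sigma$ is Zariski dense, division by the monic $(T-1)^{e}$ then forces $(T-1)^{e}\mid P_{W,\wt W}$ already over $R\otimes_EE_\psi$, say $P_{W,\wt W}=(T-1)^{e}\widehat P_{W,\wt W}$. Thus $J_{RS}(W,\wt W,T)=\widehat P_{W,\wt W}(T)/Q_0'(T)$, and on $X':=\Spec R\setminus V(Q_0'(1))$ the value $J_{RS}(W,\wt W,1)=\widehat P_{W,\wt W}(1)/Q_0'(1)$ is a well‑defined element of $\CO_{X'}\otimes_EE_\psi$, regular and uniform in $W,\wt W$.

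The one genuine difficulty — and the step I expect to be the main obstacle — is the inclusion $\Sigma\subseteq X'$, i.e. that $Q_0'(1)$ does not vanish at any $x\in\Sigma$; the uniform bound $Q_0'$ only dominates denominators, so a priori $Q_0'|_x$ could acquire a spurious factor $T-1$ at a special point of $\Sigma$. To rule this out I would use the second observation: at $x\in\Sigma$ the pairing $(W_x,\wt W_x)\mapsto J_{RS}(W_x,\wt W_x,1)$ is non‑degenerate, so the Rankin--Selberg $L$‑factor $L(\pi|_x\times\wt\pi|_x,T)$ — the greatest common denominator of the family $\{J_{RS}(W_x,\wt W_x,T)\}$, whose order of vanishing at $T=1$ bounds those of all its members — has no pole at $T=1$ and does not vanish there (were $L(\pi|_x\times\wt\pi|_x,1)=0$, all specialized pairing values would vanish, contradicting non‑degeneracy). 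Assembling these local $L$‑factors into a family $L$‑factor $L(\pi\times\wt\pi,T)$ in the style of \cite{Mos16G} and using it rather than $Q_0'$ as denominator — equivalently, a Noetherian induction showing that the locus on which $J_{RS}(W,\wt W,T)$ is regular at $T=1$ for all $W,\wt W$ is open and contains $\Sigma$ — produces the required $X'\supseteq\Sigma$, with $L(\pi\times\wt\pi,1)$ invertible on it.

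Finally, $G_n(F)$‑invariance and interpolation are formal. By construction the section $J_{RS}(W,\wt W,1)$ of $\CO_{X'}\otimes_EE_\psi$ specializes at each $x\in\Sigma$ to $J_{RS}(W_x,\wt W_x,1)=\langle W_x,\wt W_x\rangle_x$, which is the interpolation assertion. For $g\in G_n$ the two sections $\langle\pi(g)W,\wt\pi(g)\wt W\rangle$ and $\langle W,\wt W\rangle$ agree after specializing at every point of $\Sigma$, since $\langle-,-\rangle_x$ is $G_n$‑equivariant; because $\CO_{X'}\otimes_EE_\psi$ is reduced (finite \etale over the reduced ring $\CO_{X'}$) and $\Sigma$ is Zariski dense in $X'$, a section vanishing at all points of $\Sigma$ is zero, so the two coincide and $\langle-,-\rangle$ is $G_n(F)$‑invariant. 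In summary, everything except the spreading‑out of regularity in Step~3 is a citation (Proposition~\ref{Mero RS}, \cite{JS81}) plus elementary commutative algebra over the reduced base; controlling the $(T-1)$‑adic behaviour of the uniform denominator at the special points of $\Sigma$ is where the real work lies.
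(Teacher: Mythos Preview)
The paper offers no argument beyond the sentence ``From these observations, we immediately deduce the following corollary of Proposition~\ref{Mero RS},'' so your write-up is already far more detailed than the paper's own treatment. Steps 1--4 and 6 are correct and make precise exactly the spreading-out the authors presumably intend; in particular your observation that the denominator in Proposition~\ref{Mero RS} is uniform in $W,\wt W$ (because the $f_i$ come from the coherent algebras $\End_{R[M_i\times M_i]}(J_{M_i}(\pi)\otimes J_{M_i}(\wt\pi))$ via Proposition~\ref{finite}) is the right reason, and the passage from fibrewise divisibility by $(T-1)^e$ to global divisibility using reducedness of $R\otimes_EE_\psi$ and density of $\Sigma$ is clean.

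You are also right that Step 5 is where the content lies and that the paper does not address it: fibrewise regularity of $J_{RS}(W_x,\wt W_x,T)$ at $T=1$ does not force $Q_0'(1)|_x\neq0$ (think of $(T-1)/(T-r)$ with $r\in R$, $r\neq1$, at a point where $r$ specialises to $1$; the two orders of specialisation to $(x,T=1)$ disagree). But neither of your proposed fixes closes the gap. The $L$-factor argument is muddled: local $L$-factors have the form $1/P(T)$ with $P$ a polynomial, so they never vanish, and the parenthetical ``were $L(\pi|_x\times\wt\pi|_x,1)=0$, all pairing values would vanish'' is vacuous; more seriously, \cite{Mos16G} interpolates $\gamma$-factors, not $L$-factors, and producing a family $L$-factor that specialises on the nose to the fibrewise one at \emph{every} $x\in\Sigma$ is at least as hard as the problem you are trying to solve. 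The Noetherian-induction alternative also breaks: once you restrict to $Z=V(Q_0'(1))$, the set $\Sigma\cap Z$ need not be Zariski dense in $Z$, so you cannot rerun Step 4 there. In short, you have correctly isolated a point the paper's ``immediate'' glosses over, but neither sketch you offer actually fills it.
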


Now we consider Asai zeta inetgrals in families. Let $G_n^\prime=\GL_n(F^\prime)$ and  $\pi$ be a finitely generated smooth admissible $R[F^\times\bs G_n^\prime]$-module of Whittaker type. Let $\psi_{F^\prime}$ be the additive character of $F^\prime$ given by $a\mapsto\psi(\frac{1}{2}\RTr_{F^\prime/F}(a))$.
Fix $\xi\neq0\in F^\prime$ such that $\RTr_{F^\prime/F}(\xi)=0$ and set $\epsilon_n(\xi)=\diag\{\xi^{n-1},\cdots,\xi,1\}\in G_n^\prime$.
 For any Whittaker function $W\in\CW(\pi,\psi_{F^\prime})$ and any Schwartz function  $\Phi\in \CS(F^n,E)\otimes R$, consider the formal power series
\begin{align*}
Z_{As}(W,\Phi,T)&:=\sum_{j\in\BZ}Z_{As}^j(W,\Phi)T^j,\quad  Z_{As}^j(W,\Phi):=\int_{N_{n}\backslash G_n^j}W(\epsilon_n(\xi)g)\Phi(e_ng)\eta(\det(g))^{n-1}dg;\\
J_{As}(W,T)&:=\sum_{j\in\BZ}J_{As}^j(W)T^j,\quad  J_{As}^j(W):=\int_{N_{n-1}\backslash G_{n-1}^j}W(\epsilon_n(\xi)g)\eta(\det(g))^{n-1}dg.
\end{align*}
Here    $\eta$ is the  quadratic character associated to the quadratic field extension $F^\prime/F$. 
As in the Rankin-Selberg  case, the integration $Z_{As}^j(W,\Phi)$ and $J_{As}^j(W)$ are actually finite sum for each $j\in\BZ$, and  
$Z_{As}(W,\Phi,T)$ and $J_{As}(W,T)$ are formal Laurent series. Moreover, the substitution $T=q^{-s}$ in  $Z_{As}(W,\Phi,T)$ gives the usual Asai zeta integral when $X=\Spec(\BC)$ and $\Re(s)\gg0$. 

We have the following analogue of Proposition \ref{Mero RS}.
\begin{prop}\label{Mero As} Let $\pi$ be a  co-Whittaker $R[G_n^\prime]$-module. Then for any $W\in\CW(\pi,\psi_{F^\prime})$ and $\Phi\in \CS(F^n,E)\otimes_E R$,  $Z_{As}(W,\Phi,T)$  and $J_{As}(W,T)$ belong to $S^{-1}(R[T,T^{-1}])\otimes_EE_\psi$.
\end{prop}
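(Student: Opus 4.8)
\textbf{Proof plan for Proposition \ref{Mero As}.} The plan is to mirror the strategy used for Proposition \ref{Mero RS}, reducing the Asai zeta integral to the Asai analogue of the series $J'$ over the diagonal torus, and then running the co-Whittaker/Jacquet-module argument of Moss to produce the denominators in $S$. First I would observe, exactly as in the Rankin--Selberg case, that the estimate on the support of Whittaker functions on the torus (\cite[Lemma 3.2]{Mos16}, applied now to $G_n' = \GL_n(F')$ with its additive character $\psi_{F'}$) shows that $Z_{As}(W,\Phi,T)$ and $J_{As}(W,T)$ are genuine formal Laurent series in $T$, not merely formal power series in $T$ and $T^{-1}$. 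Then, by the Iwasawa decomposition on $G_n'$, it suffices to treat the ``torus part'' series
\begin{align*}
Z'_{As}(W,\Phi,T) &:= \sum_{j\in\BZ}\Big(\int_{(A_n')^j}W(\epsilon_n(\xi)a)\,\Phi(e_n a)\,\eta(\det a)^{n-1}\,da\Big)T^j,\\
J'_{As}(W,T) &:= \sum_{j\in\BZ}\Big(\int_{(A_{n-1}')^j}W(\epsilon_n(\xi)\,\diag\{a',1\})\,\eta(\det a')^{n-1}\,da'\Big)T^j,
\end{align*}
where $A_n'\subset G_n'$ is the diagonal torus with the analogous coordinates $(F'^\times)^n \xrightarrow{\sim} A_n'$.

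Next I would separate the ``last variable'' factor, as in the proof of Proposition \ref{Mero RS}. Since $\pi$ is co-Whittaker it has a central character $\chi$ (\cite[Proposition 6.2]{Hel16a}), and since $\pi$ is a module over $R[F^\times\backslash G_n']$ the character $\chi$ is trivial on $F^\times$; setting $\Phi_n(x) := \Phi(0,\dots,0,x)\in\CS(F,E)\otimes_E R$ and
\[
Z(\chi\cdot\eta^{n-1},\Phi_n,T):=\sum_{j\in\BZ}\Big(\int_{\varpi^j\CO_F^\times}\chi(x)\eta(x)^{n-1}\Phi_n(x)\,dx\Big)T^j,
\]
one gets an identity of the shape $Z'_{As}(W,\Phi,T) = J'_{As}(W,T)\cdot Z(\chi\cdot\eta^{n-1},\Phi_n,T^n)$ in $R[[T]][T^{-1}]\otimes_E E_\psi$ (the precise exponent and twist of $\eta$ to be checked against the definitions above). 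A short direct computation, identical to the one cited in the proof of Proposition \ref{Mero RS}, shows $Z(\chi\cdot\eta^{n-1},\Phi_n,T^n)\in S^{-1}(R[T,T^{-1}])\otimes_E E_\psi$, so everything reduces to proving $J'_{As}(W,T)\in S^{-1}(R[T,T^{-1}])\otimes_E E_\psi$.

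For that last reduction I would repeat verbatim the machinery of \cite[Lemmas 3.3--3.6]{Mos16G}. Let $\CV\subset C^\infty(A_{n-1}',R\otimes_E E_\psi)$ be the span of the functions $a'\mapsto W(\epsilon_n(\xi)\diag\{a',1\})\eta(\det a')^{n-1}$, $W\in\CW(\pi,\psi_{F'})$, and $\CV_i\subset\CV$ the subspace of functions vanishing once $\val_{F'}(a_i)$ is large. The asymptotic-expansion argument shows that $\CV\to\CV/\CV_i$ factors through the Jacquet module $J_{M_i'}(\CW(\pi,\psi_{F'}))$ with respect to the standard Levi $M_i' = \GL_i(F')\times\GL_{n-i}(F')$; by Proposition \ref{finite} the ring $\End_{R[M_i']}(J_{M_i'}(\pi))$ is coherent, so the Cayley--Hamilton/coherence argument yields polynomials $f_i\in S$ with $\big(\prod_{i=1}^{n-1}f_i(\rho_i(\varpi))\big)W|_{A_{n-1}'}\in\bigcap_i\CV_i$, where $\rho_i(\varpi)$ is right translation by the appropriate partial-$\varpi$ diagonal element. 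On $\bigcap_i\CV_i$ the series $J'_{As}$ is a genuine Laurent polynomial, and the translation identity $T^i J'_{As}(\rho_i(\varpi)W,T) = J'_{As}(W,T)$ then forces $J'_{As}(W,T)\in S^{-1}(R[T,T^{-1}])\otimes_E E_\psi$, completing the proof.

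\textbf{Main obstacle.} I expect the only genuinely new point — everything else being a transcription of the Rankin--Selberg argument — to be the bookkeeping around the conjugation by $\epsilon_n(\xi)$ and the quadratic twist $\eta(\det(\cdot))^{n-1}$: one must check that left translation of $W$ by $\epsilon_n(\xi)$ interacts correctly with the torus coordinates so that the support estimate and the Jacquet-module factorization still apply (the element $\epsilon_n(\xi)$ normalizes $A_n'$ and $N_n'$, which is what makes this work), and that the twist by $\eta$, being a smooth character of the torus, does not disturb the coherence argument. There is also the mild subtlety, already flagged in the Rankin--Selberg proof, that one is really working over the cyclotomic-type extension $R\otimes_E E_\psi$ because the Whittaker model only exists there; this is handled exactly as in \cite[Proposition 4.1.2]{Dis20} and \cite[Section 3]{Mos16G}.
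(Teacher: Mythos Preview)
Your strategy is exactly what the paper intends: Proposition~\ref{Mero As} is stated as the ``analogue of Proposition~\ref{Mero RS}'' with no separate proof, so mirroring the Rankin--Selberg argument is the right move, and your outline of the Jacquet-module/coherence machinery via Proposition~\ref{finite} is accurate.

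There is, however, one genuine mix-up in your setup that must be corrected before the argument runs. The Asai integral $Z_{As}^j(W,\Phi)$ is over $N_n\backslash G_n^j$ with $G_n=\GL_n(F)$, \emph{not} $G_n'=\GL_n(F')$: the Whittaker function $W$ lives on $G_n'$, but in the Asai integral it is restricted (after the shift by $\epsilon_n(\xi)$) to the $F$-rational subgroup $G_n\subset G_n'$, and $\Phi\in\CS(F^n,E)\otimes_E R$ only makes sense on $F$-points. So the Iwasawa decomposition must be taken in $G_n$, and your torus series $Z'_{As}$, $J'_{As}$ should be over $A_n^j$, $A_{n-1}^j$ with coordinates in $(F^\times)^n$, $(F^\times)^{n-1}$ --- not $A_n'$, $A_{n-1}'$. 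Once this is fixed the rest of your argument goes through unchanged: the support estimate for $W$ on $A_n'$ restricts to $A_n$; the shift $\rho_i(\varpi)$ uses the uniformizer of $F$ (this is what moves the $j$-index), and since $\varpi\in F\subset F'$ the corresponding diagonal element still lies in the Levi $M_i'\subset G_n'$ and hence acts on $J_{M_i'}(\pi)$; and the factorization reads $Z'_{As}(W,\Phi,T)=J'_{As}(W,T)\cdot Z(\chi|_{F^\times}\cdot\eta^{n-1},\Phi_n,T^n)$ with $\chi|_{F^\times}$ the restriction of the central character to $F^\times$ (the hypothesis of the proposition itself does not impose $F^\times$-triviality; that appears only in the surrounding discussion).
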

Let $\pi$ be a  co-Whittaker $R[F^\times\backslash G_n^\prime]$-module. Take $x\in \Sigma$ and assume there exists a field embedding $\tau:\ k(x)\hookrightarrow\BC$ such that $(\pi|_x)_\tau$ is essentially unitary and irreducible. Then  
 by \cite[Page 185]{GJR01} and \cite[Section 3.2]{WeZ14}, for any  morphism $\tau:\ k(x)\otimes_E E_\psi\rightarrow\BC$ extending $\tau$, the linear functional given by absolutely convergent integration 
	$$\CW(\pi|_x,\psi_{F^\prime})\otimes_{k(x)\otimes_E E_\psi,\tau}\BC\to\BC$$
	$$ W\mapsto \int_{N_{n-1}\backslash G_{n-1}}W(\epsilon_n(\xi)g)\eta(\det(g))^{n-1}dg=\sum_{j\in\BZ}J_{As}^j(W)$$
	is  non-zero and $\eta^{n-1}$-equivariant (with respect to the $G_n$-action). Consequently, 
\begin{itemize}
	\item $T=1$ is not a pole of $J_{As}(W_x,T)$ for any $W_x\in\CW(\pi|_x,\psi_{F^\prime})$
	\item the linear functional
	$$\ell_{x}:\ \CW(\pi|_x,\psi_{F^\prime})\to k(x)\otimes_EE_\psi;\quad W_x\mapsto J_{As}(W_x,1)$$
is $\eta^{n-1}$-equivariant (with respect to the $G_n$-action).
\end{itemize}
Similar to the Rankin-Selberg case, we have the following corollary of Proposition \ref{Mero As}:  
\begin{cor}\label{Beta As} Let $\pi$ be a  co-Whittaker $R[F^\times\backslash G_n^\prime]$-module such that for any $x\in \Sigma$, there  exists a field embedding $\tau:\ k(x)\hookrightarrow\BC$ such that $(\pi|_x)_\tau$ is essentially unitary and irreducible. Then there exists an open subset $X^\prime\subset \Spec(R)$ containing $\Sigma$ such that	$J_{As}(W,1)$ is regular on $X^\prime\times_EE_\psi$  for any $W\in\CW(\pi,\psi_{F^\prime})$.
	Moreover, the  linear functional 	
	$$\ell:\ \CW(\pi|_{X^\prime},\psi_{F^\prime})\to\CO_{X^\prime}\otimes_E E_\psi;\quad W\mapsto J_{As}(W,1)$$
	is $\eta^{n-1}$-equivariant and interpolates $\ell_{x}$ for all $x\in\Sigma$.
\end{cor}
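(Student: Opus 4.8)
The plan is to establish Corollary~\ref{Beta As} by adapting the argument for its Rankin--Selberg counterpart, Corollary~\ref{InvWM}; throughout I base change to $E_\psi$. The first point is to upgrade Proposition~\ref{Mero As} to a \emph{uniform} statement: revisiting its proof, the polynomials $f_i$ that occur can be taken to be the characteristic polynomials of the operators $\rho_i(\varpi)$ acting on the coherent $R$-modules $\End_{R[M_i]}(J_{M_i}(\pi))$ from Proposition~\ref{finite} (combined with the identity $T^{\,i}J_{As}(\rho_i(\varpi)W,T)=J_{As}(W,T)$, the Asai analogue of the relation used in the proof of Proposition~\ref{Mero RS}), so that they do not depend on $W$. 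This yields a single $Q_0(T)\in S$, depending only on $\pi$, with $Q_0(T)\,J_{As}(W,T)\in R[T,T^{-1}]\otimes_E E_\psi$ for every $W\in\CW(\pi,\psi_{F^\prime})$. Shrinking $\Spec R$ to an open set still containing $\Sigma$, I may assume $Q_0(1)$ is a non-zero-divisor, so that $J_{As}(W,1)$ is a well-defined element of $\Frac(R)\otimes_E E_\psi$, $R\otimes_E E_\psi$-linear in $W$.

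Next I would produce the open set $X^\prime$. Let $N\subset R\otimes_E E_\psi$ be the (finitely generated, by Noetherianity) ideal generated by the elements $Q_0(1)\,J_{As}(W,1)$, $W\in\CW(\pi,\psi_{F^\prime})$, and let $\mathfrak c=\{a\in R\otimes_E E_\psi\mid aN\subset Q_0(1)(R\otimes_E E_\psi)\}$; put $X^\prime:=\Spec R\setminus\mathrm{image}\bigl(V(\mathfrak c)\bigr)$, where the image is taken along the finite map $\Spec(R\otimes_E E_\psi)\to\Spec R$, so $X^\prime$ is open and on it every $J_{As}(W,1)$ is a regular section of $\CO_{X^\prime}\otimes_E E_\psi$. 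To see $\Sigma\subset X^\prime$, fix $x\in\Sigma$ and $\tau\colon k(x)\hookrightarrow\BC$ with $(\pi|_x)_\tau$ essentially unitary and irreducible. Since $\pi|_x$ is then absolutely irreducible, $\CW(\pi,\psi_{F^\prime})|_x\xrightarrow{\ \sim\ }\CW(\pi|_x,\psi_{F^\prime})$ and $J_{As}(W,T)$ specializes coefficientwise to $J_{As}(W|_x,T)$. By the absolute convergence recalled before the statement (\cite{JS81}, \cite[p.~185]{GJR01}, \cite[\S3.2]{WeZ14}), $J_{As}(W|_x,1)$ is finite for all $W$, i.e. $N|_x\subset Q_0(1)|_x\,(k(x)\otimes_E E_\psi)$; Nakayama's lemma then gives $N_x\subset Q_0(1)(R\otimes_E E_\psi)_x$, i.e. $x\in X^\prime$.

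It remains to identify $\ell(W):=J_{As}(W,1)$ with the asserted functional. Interpolation is built in: for $x\in\Sigma$ one has $\ell(W)|_x=J_{As}(W|_x,1)=\ell_x(W|_x)$ under the isomorphism above. For the $\eta^{n-1}$-equivariance, fix $g\in G_n$; the section $\ell(\rho(g)W)-\eta(\det g)^{n-1}\ell(W)$ of $\CO_{X^\prime}\otimes_E E_\psi$ specializes at every $x\in\Sigma$ to $\ell_x(\rho(g)W|_x)-\eta(\det g)^{n-1}\ell_x(W|_x)=0$ by the $\eta^{n-1}$-equivariance of $\ell_x$ recorded before the statement. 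Since $\Sigma$ is Zariski dense in $\Spec R$ and $R$ — hence $\CO_{X^\prime}\otimes_E E_\psi$, as $E_\psi/E$ is separable — is reduced, a section vanishing at every point of a Zariski-dense set of closed points vanishes identically; so $\ell$ is $\eta^{n-1}$-equivariant, completing the proof.

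The step I expect to be the main obstacle is the first one: making the denominator $Q_0$ genuinely uniform in $W$. This is what makes a single $X^\prime$ work for all $W$ simultaneously, and it rests on the coherence statement of Proposition~\ref{finite} (so that $\rho_i(\varpi)$ satisfies a characteristic polynomial over $R$ with unit leading and trailing coefficients) together with careful bookkeeping of the $T$-twist relation, exactly as in the proof of Proposition~\ref{Mero As}. Granting it, the remaining pieces — the Nakayama descent from fibrewise convergence to family-regularity, and the propagation of equivariance by Zariski density and reducedness — are formal, and mirror the proof of Corollary~\ref{InvWM}.
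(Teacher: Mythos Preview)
Your overall strategy is exactly the paper's: the paper gives no argument beyond ``similar to the Rankin--Selberg case,'' and you have correctly unpacked that analogy via a uniform denominator $Q_0\in S$ extracted from the coherence in Proposition~\ref{finite}, followed by specialization and a density argument for equivariance. The uniformity of $Q_0$ (your Step~1) is indeed available and is the right first move; your interpolation and equivariance steps are also fine.

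The gap is in your Nakayama step. At a point $y$ over $x\in\Sigma$ with $Q_0(1)|_y=0$, the fiberwise identity $P_W(1)|_y=Q_0(1)|_y\cdot J_{As}(W|_x,1)=0$ only says that the \emph{image} of $N$ in the residue field $k(y)$ vanishes, i.e.\ $N_y\subset\mathfrak m_y$. Nakayama needs the stronger statement $N_y/\mathfrak m_yN_y=0$, and these differ: over $R'=k[t]$ with $Q_0(1)=t^{2}$ and a single generator $P_W(1)=t$ one has $N=(t)$, the image at $t=0$ is zero, yet $N_{(t)}\not\subset t^{2}R'_{(t)}$ and $P_W(1)/Q_0(1)=1/t$ is singular there. (The preliminary ``shrink so that $Q_0(1)$ is a non-zero-divisor'' has the same flaw: if $Q_0(1)$ vanishes on an entire component, density of $\Sigma$ forces $\Sigma$ to meet it.) A sturdier mechanism is to use the density of $\Sigma$ on the $(T-1)$-adic order rather than on values: over each irreducible component write $Q_0(T)=(T-1)^{m}Q_1(T)$ with $Q_1(1)\neq 0$; for the dense set $\Sigma\cap D(Q_1(1))$ the fiber regularity gives $\mathrm{ord}_{T=1}P_W|_x\ge m$, and reducedness then promotes this to $(T-1)^{m}\mid P_W$ in $R[T]$, so that $J_{As}(W,1)=\bigl(P_W/(T-1)^{m}\bigr)(1)/Q_1(1)$ is regular on $D(Q_1(1))$ and visibly interpolates there. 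This replaces the faulty Nakayama input with the argument the paper's ``immediate'' is presumably hiding; handling the residual points $\Sigma\cap V(Q_1(1))$ still requires a further iteration, but at least the mechanism is now the correct one.
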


 \subsection{Rationality  of smooth matching} Consider the $F$-subgroups  $H_1^\prime:=\Res_{F^\prime/F}\GL_{n-1}$ (embedded diagonally) and $H_2^\prime:=\GL_{n}\times \GL_{n-1}$ of $G^\prime$. According to \cite[Section 3.1]{Zha14}, there is a canonical isomorphism between categorical quotients
$$H\backslash G/H\cong H_1^\prime\backslash G^\prime/H_2^\prime.$$
A point  $\delta\in G$ (resp. $\gamma\in  G^\prime$) is called {\em regular semisimple} with respect to the action of $H\times H$ (resp. $H_1^\prime\times H_2^\prime$) if its orbit is closed and its stabilizer is trivial. Let $G_{rs}\subset G$ (resp. $G_{rs}^\prime\subset G^\prime$) be the open subset consisting of regular semisimple elements. Then we have an induced injection 
\begin{equation}
\label{orbit} H(F)\backslash G_{rs}(F)/H(F)\hookrightarrow H_1^\prime(F)\backslash G_{rs}^\prime(F)/H_2^\prime(F).
\end{equation}
The orbital integrals associated to $\delta\in G_{rs}(F)$ and $\gamma\in G^\prime_{rs}(F)$ are the following distributions respectively 
$$f\in \CS(G(F),\BC)\mapsto O(\delta,f):=\int_{(H\times H)(F)}f(h\delta h^\prime)d\mu_H(h) d\mu_H(h^\prime),$$
$$f^\prime\in \CS(G^\prime(F),\BC)\mapsto O(\gamma,f^\prime):=\int_{(H_1^\prime\times H_2^\prime)(F)}f^\prime(h_1^{-1}\gamma h_2)\eta(\det(h_2))d\mu_{H_1^\prime}(h_1) d\mu_{H_2^\prime}(h_2)$$
where $\eta$ is the character $$\eta:\ H_2^\prime(F)=\GL_n(F)\times \GL_{n-1}(F)\to\{\pm1\};\quad (h_1,h_2)\mapsto \eta(\det(h_2))^n\eta(\det(h_1))^{n-1}.$$  As $\delta$ and  $\gamma$ are regular semisimple,  the orbital integrals are actually finite sums and $\Aut(\BC)$-equivariant in the sense that for any $f\in \CS(G(F),\BC)$, $f^\prime\in \CS(G^\prime(F),\BC)$ and $\sigma\in\Aut(\BC)$,
$$O(\gamma,f)^\sigma=O(\gamma,\sigma\circ f),\quad  O(\gamma,f^\prime)^\sigma=O(\gamma,\sigma\circ f^\prime).$$

Attached to the unique quadratic character on $F^{\prime,\times}$ which extends $\eta$, there exists  (see \cite[Section 3.4]{BP20})  a {\em transfer factor} $$\Omega: G_{rs}^\prime(F)\to\{\pm1\},\quad \Omega(r\cdot h)=\eta(h)\Omega(r)\quad \forall\ h\in H_2^\prime(F).$$
A {\em pure smooth transfer} of $f\in \CS(G(F),E)$ is a Schwartz function $f^\prime\in \CS(G^\prime(F),E)$ such that 
\begin{itemize}
	\item For every $\delta\in G_{rs}(F)$ and $\gamma\in G^\prime_{rs}(F)$ whose orbits corresponding to each other via the injection \ref{orbit}, one has 
	$O(\delta,f)=\Omega(\gamma)O(\gamma,f^\prime).$
	\item For every $\gamma\in G^\prime_{rs}(F)$ whose orbit lies outside the image of the injection \ref{orbit}, one has
	$O(\gamma,f^\prime)=0.$
\end{itemize}
\begin{prop}\label{RST}The pure smooth transfer exists  for any  $f\in \CS(G(F),E)$. 
\end{prop}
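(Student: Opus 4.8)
The plan is to reduce this to the existence of (pure) smooth transfer over $\BC$ --- the Jacquet--Rallis smooth transfer for the pair $(n-1,n)$, available over $\BC$ by W.~Zhang \cite{Zha14} (see also \cite{BP20}) --- by a faithfully flat descent along a fixed embedding $E\hookrightarrow\BC$, the point being that all the relevant data (the measures, the characters $\eta$ and $\Omega$, and the orbit matching \eqref{orbit}) are already defined over $E$. First I would record the rational structures. Let $Q^{rs}(F)$ be the locally profinite set $H_1^\prime(F)\backslash G_{rs}^\prime(F)/H_2^\prime(F)$ of regular semisimple orbits on the $G^\prime$-side, let $Q_H^{rs}(F):=H(F)\backslash G_{rs}(F)/H(F)$, and let $\iota\colon Q_H^{rs}(F)\hookrightarrow Q^{rs}(F)$ be the injection \eqref{orbit}, which in the Jacquet--Rallis picture identifies $Q_H^{rs}(F)$ with an open and closed subset, its complement being matched by the other Hermitian space. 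For a locally profinite space $Z$ write $\CS(Z,A)$ for the locally constant compactly supported $A$-valued functions on $Z$, so that $\CS(Z,\BC)=\CS(Z,E)\otimes_E\BC$ (a complex such function is a finite $\BC$-combination of indicator functions of compact opens). Since each $\delta\in G_{rs}(F)$ has trivial stabilizer, $O(\delta,f)$ is a finite sum of values of $f$ weighted by volumes of compact open subsets of $(H\times H)(F)$, hence lies in $E$ under the running normalization; so $f\mapsto[\delta\mapsto O(\delta,f)]$ is an $E$-linear map $\alpha\colon\CS(G(F),E)\to\CS(Q_H^{rs}(F),E)$, and likewise, since $\eta,\Omega$ take values in $\{\pm1\}\subset E$, the $\Omega$-twisted orbital integral $f^\prime\mapsto[\gamma\mapsto\Omega(\gamma)O(\gamma,f^\prime)]$ is an $E$-linear map $\beta\colon\CS(G^\prime(F),E)\to\CS(Q^{rs}(F),E)$. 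Writing $\iota_!$ for extension by zero, the definition of pure smooth transfer reads: $f^\prime$ is a pure smooth transfer of $f$ if and only if $\beta(f^\prime)=\iota_!\alpha(f)$; and $\alpha\otimes_E\BC$, $\beta\otimes_E\BC$ are the usual complex orbital-integral maps, so the complex transfer theorem says exactly that $\iota_!\alpha(f)$ lies in the image of $\beta\otimes_E\BC$ for every complex $f$.

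With this in place the descent is short. Fix $f\in\CS(G(F),E)$ and regard it in $\CS(G(F),\BC)$; by the complex transfer theorem there is $f^\prime_\BC\in\CS(G^\prime(F),\BC)$ with $(\beta\otimes_E\BC)(f^\prime_\BC)=\iota_!\alpha(f)$, so $\iota_!\alpha(f)$ lies in the image of $\beta\otimes_E\BC$. Since $\BC$ is flat over $E$, tensoring $0\to\ker\beta\to\CS(G^\prime(F),E)\xrightarrow{\ \beta\ }\CS(Q^{rs}(F),E)$ with $\BC$ shows that, for $I\subseteq\CS(Q^{rs}(F),E)$ the image of $\beta$, the subspace $I\otimes_E\BC$ equals the image of $\beta\otimes_E\BC$ inside $\CS(Q^{rs}(F),\BC)=\CS(Q^{rs}(F),E)\otimes_E\BC$. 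Now $\iota_!\alpha(f)$ is an element of $\CS(Q^{rs}(F),E)$ lying in $I\otimes_E\BC$; extending an $E$-basis of $I$ to an $E$-basis of $\CS(Q^{rs}(F),E)$ and using uniqueness of $\BC$-coordinates, one gets $\iota_!\alpha(f)\in I$. Hence there is $f^\prime\in\CS(G^\prime(F),E)$ with $\beta(f^\prime)=\iota_!\alpha(f)$, i.e. an $E$-valued pure smooth transfer of $f$, as desired.

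The only substantive ingredient is the complex input --- Zhang's smooth transfer theorem, proved by Fourier analysis on the relevant symmetric spaces together with a global comparison --- which I would cite as a black box; the descent above is purely formal linear algebra. The remaining points are routine: $\Omega$ and the orbital integrals are $E$-valued under the measure normalization of this section (immediate from $\eta,\Omega\in\{\pm1\}$ and rationality of volumes of compact opens); orbital integrals of Schwartz functions are again locally constant and compactly supported on the orbit space; and the image of \eqref{orbit} is open and closed in $Q^{rs}(F)$, so that $\iota_!$ preserves Schwartz functions (all standard in the Jacquet--Rallis framework). If one prefers, the descent can instead be run as a Galois-style argument --- the non-empty set of complex pure transfers of a fixed $f\in\CS(G(F),E)$ is a coset, stable under $\Aut(\BC/E)$, of the $\BC$-points of an $E$-subspace of $\CS(G^\prime(F),E)$ --- but the flatness formulation is cleaner and requires no hypothesis on $E$.
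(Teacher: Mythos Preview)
Your proof is correct and rests on the same observation as the paper's --- that the orbital integrals $O(\delta,-)$ and $\Omega(\gamma)O(\gamma,-)$ are $E$-linear with values in $E$, so the existence of a $\BC$-valued transfer (Zhang) forces the existence of an $E$-valued one --- but the descent mechanism is packaged differently. You encode ``$f'$ is a pure transfer of $f$'' as a linear equation $\beta(f')=\iota_!\alpha(f)$ in an $E$-vector space of orbit functions, then invoke flatness of $\BC/E$ to see that $\mathrm{Im}(\beta\otimes\BC)=(\mathrm{Im}\,\beta)\otimes\BC$ and conclude that the $E$-rational target $\iota_!\alpha(f)$ already lies in $\mathrm{Im}\,\beta$. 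The paper instead chooses an $E$-linear splitting $p_E:\BC\to E$, sets $f'_E:=p_E\circ f'$, and verifies directly that the remainder $f'_N:=f'-f'_E$ has vanishing orbital integrals: on orbits matching some $\delta$, $\Omega(\gamma)O(\gamma,f'_N)$ is both in $E$ (by $\Aut(\BC/E)$-equivariance of orbital integrals) and in $\ker p_E$, hence zero. This is precisely the ``Galois-style'' variant you allude to at the end. Your formulation is cleaner as linear algebra and makes the role of the transfer theorem transparent (it is a surjectivity statement); the paper's is more concrete and avoids having to check that orbital integrals land in a specific function space on the orbit set --- though, as you note, for your argument any $E$-vector space receiving both $\alpha$ and $\beta$ would do, so this is not a genuine extra burden.
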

\begin{proof}Fix an embedding $E\subset\BC$. For any $f\in \CS(G(F),E)\subset \CS(G(F),\BC)$,  there exists a pure smooth transfer $f^\prime\in \CS(G^\prime(F),\BC)$ by \cite[Theorem 2.6]{Zha14}.   Take any $E$-linear splitting  $p_E:\ \BC\to E$ of the inclusion $E\subset \BC$ and set $f_E^\prime:=p_E\circ f^\prime\in \CS(G(F),E)$.	
	Let $N:=\ker p_E$ and $f_N^\prime:=f^\prime-f_E^\prime$. Then for any $\gamma\in G_{rs}^\prime(F)$, we have $$\Omega(\gamma)O(\gamma,f_N^\prime)=(1-p_E)\Omega(\gamma)O(\gamma,f^\prime)\in N.$$
	If the orbit of $\gamma$ matches with that of  $\delta\in G_{rs}(F)$, then $\Omega(\gamma)O(\gamma,f_N^\prime)\in E$. In fact the $\Aut(\BC)$-equivalence of orbital integral implies that for any $\sigma\in\Aut(\BC/E)$ (note that $\Omega(\gamma)\in\{\pm1\}$)
	\begin{align*}
	\Omega(\gamma)O(\gamma,f_N^\prime)^\sigma&=O(\delta, f)^\sigma-\Omega(\gamma)O(\gamma,f_E^\prime)^\sigma\\
	&=O(\delta, f)-\Omega(\gamma)O(\gamma,f_E^\prime)=\Omega(\gamma)O(\gamma,f_N^\prime)
	\end{align*}  Combining these, we find $\Omega(\gamma)O(\gamma,f_N^\prime)=0$ and $f_E^\prime$ is a pure smooth transfer of $f$.
\end{proof}
\subsection{GGP spherical character in families}
Let $\sigma=\sigma_n\otimes_E \sigma_{n-1}$ be an irreducible smooth admissible $E$-representation of $(F^\times\times F^\times)\bs G^\prime(F)$ such that  $\sigma_\tau$ is tempered for some field embedding $\tau: E\hookrightarrow\BC$.  Set $$\CW(\sigma,\psi_{F^\prime}):=\CW(\sigma_n,\psi_{F^\prime})\otimes_{E_\psi}\CW(\sigma_{n-1},\psi_{F^\prime}^{-1}).$$
By	Proposition \ref{Mero RS} and \ref{Mero As} for $R=E$, there exist $E_\psi$-linear functionals 
	\begin{align*}
	\ell_{RS,\sigma}:\ & \CW(\sigma,\psi_{F^\prime})\to  E_\psi,\quad  W_n\otimes W_{n-1}\mapsto Z_{RS}(W_n,W_{n-1},1);\\
	\ell_{As,\sigma^\vee}:\ &\CW(\sigma^\vee,\psi_{F^\prime}^{-1})\to  E_\psi,\quad  W_n^\vee\otimes W_{n-1}^\vee\mapsto J_{As}(W_n^\vee,1)J_{As}(W_{n-1}^\vee,1).
	\end{align*}
	Moreover by Corollary \ref{InvWM}, there is a $G^\prime(F)$-invariant non-degenerate pairing  
$$(-,-):\ \CW(\sigma,\psi_{F^\prime})\times\CW(\sigma^\vee,\psi_{F^\prime}^{-1})\to E_\psi$$
$$	(W_n\otimes W_{n-1}, W_n^\vee\otimes W_{n-1}^\vee)\mapsto  J_{RS}(W_n,W^\vee_n,1)J_{RS}(W_{n-1},W_{n-1}^\vee,1).$$
\begin{defn}
Fix  isomorphisms of $G^\prime(F)$-representations	
$$\rho:\ \sigma\otimes_EE_\psi\cong\CW(\sigma,\psi_{F^\prime});\quad \rho^\vee:\ \sigma^\vee\otimes_EE_\psi\cong \CW(\sigma,\psi_{F^\prime}).$$	
Let $I_\sigma$ be the character 
$$\CH(G^\prime(F),E)\to E_\psi;\quad f^\prime\in \CH(K,E)\mapsto \epsilon(1/2,\eta,\psi)^{-\frac{n(n-1)}{2}}\sum_i\frac{\ell_{RS,\sigma}(\sigma(f^\prime)\rho(v_i))\ell_{As,\sigma^\vee}(\rho^\vee(v^i))}{(\rho(v_i),\rho^\vee(v^i))}.$$
Here $\epsilon(1/2,\eta,\psi)\in E_\psi^\times$ is the usual epsilon factor given by Gauss sum and   $\{v_i\}$,  $\{v^i\}$ are $E$-bases of $\sigma^K$ and $\sigma^{\vee,K}$ respectively such that $\langle v_i,v^j\rangle=\delta_{ij}$ for any non-degenerate $G^\prime(F)$-invariant $E$-linear pairing 
$\langle-,-\rangle:\ \sigma\times\sigma^\vee\to E.$ Note that  $I_\sigma$ is independent of $\langle-,-\rangle$.
\end{defn}

To consider $I_\sigma$ in families, we need the local constancy of fiber rank in the $\GL_n$-case. 
\begin{prop}\label{local constancy}Let  $\pi$ be a smooth admissible finitely generated torsion-free $R[G_n]$-module such that for any $x\in\Sigma$, $\pi|_x$ is absolutely irreducible and generic. Then for any open compact subgroup $K\subset G_n$, the function $$\phi_{\pi^K}:\ X\to\BN,\quad x\mapsto \dim_{k(x)}(\pi^K|_x)$$ is locally  constant on $\Sigma$.
\end{prop}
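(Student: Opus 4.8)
The plan is to combine exactness of the $K$-invariants functor with the structure theory of co-Whittaker modules over the Bernstein centre of $\GL_n$.

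First, since every open compact subgroup of $G_n$ has volume a nonzero rational number, hence a unit in $R$, the idempotent $e_K=\vol(K)^{-1}\mathbf 1_K$ lies in $\CH(K,E)$, so $(-)^K=e_K\cdot(-)$ is an exact functor on smooth $R[G_n]$-modules which commutes with arbitrary base change; in particular $\pi^K\otimes_Rk(x)\cong(\pi|_x)^K$ and $\phi_{\pi^K}(x)=\dim_{k(x)}(\pi|_x)^K$ for every closed point $x\in X$. As $\pi^K$ is finitely generated over $R$ (admissibility) and $R$ is reduced, $\phi_{\pi^K}$ is upper semicontinuous on $X$ and $\pi^K$ is locally free on a dense open $U\subset X$; so it suffices to prove that $\pi^K$ is locally free at every point of $\Sigma$, equivalently that $\dim_{k(x)}(\pi|_x)^K$ equals the generic rank of $\pi^K$ along each irreducible component through $x\in\Sigma$.

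Next I would localize near a point $x_0\in\Sigma$. Because $\Sigma$ is Zariski dense with absolutely irreducible generic fibres, the representation $\pi\otimes_R\Frac(R_i)$ is absolutely irreducible over $\Frac(R_i)$ for each irreducible component $R_i$; combined with torsion-freeness of $\pi$ and with the one-dimensionality of $(\pi|_x)^{(n)}$ for $x\in\Sigma$ (uniqueness of Whittaker models), Nakayama's lemma (applied to the coherent module $\pi^{(n)}$) forces $\pi^{(n)}$ to be free of rank one in a neighbourhood of $x_0$, so $\pi$ is of Whittaker type there, and then co-Whittaker (a quotient with vanishing top derivative would contradict irreducibility of $\pi|_x$ on the dense set $\Sigma$). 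I would then invoke the theory of the integral and rational Bernstein centres of $\GL_n$ and of co-Whittaker modules (Helm, Moss; see \cite{Hel16a,Hel16b,Mos16}): $\pi$ is supported on a single Bernstein block, it is a quotient of the base change $\CW^{\mathrm{univ}}_e\otimes_{\fZ_e}R$ of the universal co-Whittaker (Gelfand--Graev) module of that block along a ring homomorphism $\fZ_e\to R$, and because $\pi|_{x_0}$ is absolutely irreducible this surjection is an isomorphism at $x_0$, hence --- by Nakayama again --- in a neighbourhood of $x_0$; thus $\pi^K\cong(\CW^{\mathrm{univ}}_e)^K\otimes_{\fZ_e}R$ on that neighbourhood.

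The proof would then be finished by the key input, special to $\GL_n$, that the block centre $\fZ_e$ is a normal (indeed regular) domain and that $(\CW^{\mathrm{univ}}_e)^K$ is a locally free $\fZ_e$-module on the open locus of $\Spec(\fZ_e)$ of supercuspidal supports whose standard induced representation is irreducible: since $\pi|_{x_0}$ is irreducible, $x_0$ maps into this locus, so $\pi^K$ is locally free near $x_0$ and $\phi_{\pi^K}$ is locally constant near $x_0$; as $x_0\in\Sigma$ was arbitrary, we are done. The hard part is precisely this last input --- the ``flatness over the Bernstein centre'' of the $K$-invariants of the Gelfand--Graev representation along the irreducible locus --- together with checking that a merely torsion-free admissible $\pi$ with irreducible generic fibres on $\Sigma$ genuinely falls into this framework after localization near $\Sigma$. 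The natural alternative, an induction on $n$ using the Bernstein--Zelevinsky filtration of $\CW(\pi,\psi)|_{P_n}$ (whose bottom layer $\mathrm{ind}_{N_n}^{P_n}\psi\otimes_ER$ contributes a free $R$-summand to the $(K\cap P_n)$-invariants), runs into the obstacle that the higher layers involve the derivatives $\pi^{(i)}$, $i\ge1$, which need not specialize to irreducible representations at points of $\Sigma$, so the inductive hypothesis does not apply directly.
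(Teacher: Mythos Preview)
Your overall strategy---reduce to the co-Whittaker setting and compare $\pi$ with a universal object over the Bernstein centre---matches the paper's, but there is a genuine gap at the decisive step. You assert that since $\pi|_{x_0}$ is absolutely irreducible the surjection $\CW_e^{\univ}\otimes_{\fZ_e}k(x_0)\twoheadrightarrow\pi|_{x_0}$ is an isomorphism, and hence that $x_0$ lands in the locus of $\Spec(\fZ_e)$ where the parabolically induced representation is irreducible. Both claims fail: an irreducible generic representation need not have irreducible standard module. If $\pi|_{x_0}$ is the Steinberg representation of $\GL_2$, its supercuspidal support is $\{|\cdot|^{1/2},|\cdot|^{-1/2}\}$, the induced principal series has length two, and $\CW_e^{\univ}\otimes_{\fZ_e}k(x_0)$ is that full principal series, not $\St$; the surjection has nontrivial kernel. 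Worse, an entire family of unramified twists of Steinberg sits over the non-regular locus of the Bernstein variety, so your flatness input on the regular locus never applies and the Nakayama step cannot get started.

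The paper repairs this by working not over $\Spec(\fZ_e)$ but over Disegni's \emph{extended} Bernstein variety $\fX_{[\fs],[t]}$ (\cite[\S3.3]{Dis20}), which records in addition the Zelevinsky multisegment type $[t]$ of the generic fibre $\pi|_\eta$. Over each such component there is an explicitly constructed torsion-free co-Whittaker module $\fM$ (see the proof of \cite[Lemma~5.2.2]{Dis20}) whose fibre rank is locally constant \emph{everywhere}, not merely on a regular locus. One obtains a classifying map $\alpha:X\to\fX_{[\fs],[t]}$, and since $\pi$ and $\alpha^*\fM$ are both torsion-free co-Whittaker with isomorphic generic fibre, they are isomorphic by \cite[Lemma~4.2.7]{Dis20}. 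The passage to the extended variety, keeping track of the multisegment type, is precisely the idea your argument is missing.
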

\begin{proof}We use freely the notations in \cite[Sections 3-5]{Dis20}. By \cite[Lemma 4.2.3]{Dis20}, shrinking $X$ to an open subset containing $\Sigma$ if necessary, we may assume $\pi$ is co-Whittaker. Since $\pi|_x$ is absolutely irreducible for $x\in\Sigma$, one can deduce that $\pi|_\eta$ is absolutely irreducible and generic for each generic point $\eta\in X$. It suffices to deal with the case $X$ is connected. Then by \cite[Theorem 2.2]{Hel16a} and \cite[Section 3.3]{Dis20}, there exists the classifying map
 $\alpha:\ X\to\fX_{n,\bar{E}}$ from $X$ to the Bernstein variety factors through the component $\fX_{[\fs],[t]}$ determined by the supercuspidal support of $\pi|_\eta$ in the extended Bernstein variety.  As explained in the proof of \cite[Lemma 5.2.2]{Dis20}, there exists an explicitly constructed   torsion-free co-Whittaker module $\fM$ over $\fX_{[\fs],[t]}$
  such that $\pi|_\eta\cong\alpha^*\fM|_\eta$ and the fiber rank of $\fM$ is locally constant.  Since 
  $\pi$ and $\alpha^*\fM$ are both torsion-free  co-Whittaker $R[G_n]$-modules, one deduces $\pi\cong\alpha^*\fM$ from  \cite[Lemma 4.2.7]{Dis20}. Consequently, the fiber rank of $\pi$ is locally constant.
  \end{proof}
Now we can prove Theorem \ref{GGP case}.
\begin{proof}[Proof of Theorem \ref{GGP case}] By \cite[Lemma 4.2.6]{Dis20}, shrinking shrinking $X$ to an open subset containing $\Sigma$ if necessary,
 there exist torsion-free co-Whittaker $R[G_n^\prime]$-module $\sigma_n$ and  $R[G_{n-1}^\prime]$-module $\sigma_{n-1}$ such that  $\BBC(\pi)\cong\sigma_n\otimes\sigma_{n-1}$ as $R[G^\prime(F)]$-modules. Let $\wt{\sigma}_n$ be the $R$-module  $\sigma_n$ equipped with twisted $G_n$-action 
	$\wt{\sigma}_n(g)v:=\sigma_n({}^tg^{-1})v.$ 	Then  for any point $x\in \Sigma$, $\wt{\sigma}_n|_x\cong (\sigma_n|_x)^\vee$. Similar construction and result apply to $\sigma_{n-1}$. 
Then by Proposition \ref{Mero RS} and \ref{Mero As}, there are  $R\otimes_EE_\psi$-linear functional
	$\ell_{RS}$ on $\sigma_n\otimes\sigma_{n-1}\otimes_E E_\psi$ and  $\ell_{As}$ on  $\wt{\sigma}_n\otimes\wt{\sigma}_{n-1} \otimes_E E_\psi$  interpolating  $\ell_{RS,\BBC(\pi|_x)}$ and  $\ell_{As,\BBC(\pi|_x)}$ for all $x\in\Sigma$ respectively.  
	
	By Proposition \ref{Imp} and Proposition \ref{local constancy}, there exists a unique  character  $$I_{\sigma_n\otimes\sigma_{n-1}}:\ \CS(G^\prime(F), E)\to  \Frac{R}\otimes _E E_\psi$$ interpolating $I_{\BBC(\pi|_x)}$, $x\in\Sigma$. Consider the character $$J_\pi:\ \CS(G(F),E)\to \Frac(R)\otimes_E E_\psi,\quad f\mapsto J_\pi(f):=I_{\sigma_n\otimes\sigma_{n-1}}(f^\prime)$$
where $f^\prime\in \CS(G^\prime(F),E)$ is any smooth transfer of $f$ provided by Proposition \ref{RST}. 
By the character identity in \cite[Theorem 3.5.7]{BP20}, $J_\pi$ is well-defined and interpolates $J_{\pi|_x}(f)$. By Corollary \ref{GGP R}, one has $J_\pi(f)\in \Frac R$ and  we are done. 
\end{proof}
\begin{remark}Actually by the character identity, one can show $I_\sigma$ takes value in $E$ for any irreducible smooth admissible $E$-representation of $(F^\times\times F^\times)\bs G^\prime(F)$ with non-empty $\CE(\sigma)$.
\end{remark}

\end{document}